\documentclass[11pt]{amsart}
\baselineskip=7.0mm
\usepackage{amsmath}
\usepackage[dvips]{epsfig}
\setlength{\baselineskip}{1.09\baselineskip}

\theoremstyle{plain}
\newtheorem{theorem}{Theorem}[section]
\newtheorem{lemma}[theorem]{Lemma}

\theoremstyle{definition}

\numberwithin{equation}{section}

\setlength{\textwidth}{6.6in} \setlength{\textheight}{8.6in}
\hoffset=-0.6truein \voffset=-0.1truein

\begin{document}

\title[Boundary Value Problems for Mixed Type Equations and Applications] {Boundary Value Problems for Mixed Type
Equations and Applications}

\author[Khuri]{Marcus A. Khuri}
\address{Department of Mathematics\\
Stony Brook University\\ Stony Brook, NY 11794}
\email{khuri@math.sunysb.edu}
\thanks{The author is partially supported by
NSF Grant DMS-1007156 and a Sloan Research Fellowship.}
\begin{abstract}
In this paper we outline a general method for finding well-posed boundary value problems
for linear equations of mixed elliptic and hyperbolic type, which extends previous techniques of Berezanskii, Didenko, and Friedrichs. This method is then used to
study a particular class of fully nonlinear mixed type equations which arise in applications to
differential geometry.
\end{abstract}
\maketitle

\section{Introduction}
\label{sec1}

An old classical problem from differential geometry asks, when can one realize a 2-dimensional Riemannian manifold, locally, in
3-dimensional Euclidean space? In other words, when can one ``see" an abstract surface, at least locally? As it turns
out, this question is equivalent to finding local solutions $z(x,y)$ to a Monge-Amp\`{e}re type equation, referred to
as the Darboux equation:
\begin{equation}\label{eqn4}
\det\nabla_{ij}z=K(\det h)(1-|\nabla_{h}z|^{2}).
\end{equation}
Here $h$ is the given Riemannian metric, $\nabla_{ij}$ are second covariant derivatives, and $K$ is the Gaussian
curvature of $h$. Another related problem is that of locally prescribing the Gaussian curvature of surfaces in 3-dimensional
Euclidean space. More precisely, given a function $K(x,y)$ defined in a neighborhood of the origin, does there exist
a graph $z=z(x,y)$ having Gaussian curvature $K$? Note that every surface may be expressed locally as a graph. This
problem is also equivalent to the local solvability of a Monge-Amp\`{e}re equation, namely
\begin{equation}\label{eqn5}
\det\partial_{ij}z=K(1+|\nabla z|^{2})^{2},
\end{equation}
where $\partial_{ij}$ are second partial derivatives.  In both equations (\ref{eqn4}) and (\ref{eqn5}), the sign of the
Gaussian curvature completely determines the type of the equation. When $K$ is positive the equation is elliptic, and
when $K$ is negative the equation is hyperbolic. Thus classical results may be used to analyze these problems in these
two situations. However when $K$ changes sign, the equation is of mixed type, and is very difficult to study. Nevertheless,
it can be shown \cite{HanKhuri} that by a suitable application of a Nash-Moser iteration, these two problems reduce to the
study of a linear equation having a particular form described below. More precisely,
in order to successfully apply the Nash-Moser iteration, one must find a well-posed boundary value problem for the associated linearized
equation, in a fixed domain about the origin, and establish certain a priori estimates. In previous work by Han, Hong, Lin, as well as the author,
this has been accomplished in the case for which the Gaussian curvature changes sign to finite order along a single smooth curve
(see \cite{Han1}, \cite{Han2}, \cite{HanHong}, \cite{Khuri1}, \cite{Lin}), and also in the case for which the Gaussian curvature vanishes to
finite order and has a zero set consisting of two transversely intersecting curves (see \cite{HanKhuri}, \cite{Khuri2}). Our
goal here is to extend these results by giving a general condition on the Gaussian curvature, which in particular allows for a change
of sign to infinite order and a zero set for $K$ which is more general than a finite number of intersecting curves. Counterexamples to the local
solvability of mixed type Monge-Amp\`{e}re equations, similar to, but not exactly of the form studied here, have been found \cite{Khuri3} in the
case of infinite order vanishing. Our main result is

\begin{theorem}\label{thm2}
Let $\varepsilon>0$ a small parameter. Suppose that the Gaussian curvature $K\in C^{\infty}$ satisfies the following condition
in a neighborhood of a point,
\begin{equation}\label{eqn7'}
\nabla_{V}K\geq\varepsilon(|\nabla K|+|K|)
\end{equation}
for some smooth vector field $V$. Then both equations (\ref{eqn4}) and (\ref{eqn5}) admit sufficiently smooth local solutions.
\end{theorem}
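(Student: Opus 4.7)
The plan is to reduce Theorem \ref{thm2} to the solvability and regularity of a linear mixed type boundary value problem via Nash-Moser iteration, in the spirit of \cite{HanKhuri}. I would first linearize the relevant Monge-Amp\`ere equation around a sufficiently regular approximate solution, producing an operator $L$ whose principal symbol degenerates precisely where $K$ changes sign. In the Nash-Moser scheme it is enough to solve $Lu=f$ in a \emph{fixed} neighborhood of the given point with quantitative estimates that lose only a controlled number of derivatives; both existence and these tame estimates are what the rest of the argument must supply.

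The second and central step is to construct, for $L$, a well-posed boundary value problem falling under the general framework outlined earlier in the paper (extending Berezanskii, Didenko, and Friedrichs). The key ingredient in that framework is a multiplier vector field; the natural candidate here is precisely the field $V$ appearing in hypothesis (\ref{eqn7'}). Pairing $Lu$ with $V\!\cdot\! u$-type expressions and integrating by parts should yield a positivity identity whose bulk term is bounded below by $\nabla_V K$, provided the boundary of the fixed neighborhood is chosen to be non-characteristic with respect to $V$ in the appropriate sense, so that the boundary contributions have a favorable sign.

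The main obstacle, and the crux of the proof, is extracting the a priori $L^2$ estimate from this identity under the weakened hypothesis (\ref{eqn7'}). In the earlier works \cite{Han1}, \cite{Han2}, \cite{HanHong}, \cite{Khuri1}, \cite{Khuri2}, \cite{Lin}, one exploits finite-order vanishing of $K$ along a smooth curve or along two transversely intersecting curves to absorb lower order terms via weighted Hardy-type inequalities adapted to the explicit zero set. Here $K$ is permitted to vanish to infinite order on a much more general set, so such structural information is no longer available. The inequality $\nabla_V K \geq \varepsilon(|\nabla K|+|K|)$ must do all of the work: it gives strict monotonicity of $K$ along $V$ and, crucially, dominates $|\nabla K|$, so that the commutator and zeroth order error terms generated by the integration by parts can be absorbed directly into the positive bulk term $\nabla_V K$, with the small parameter $\varepsilon$ controlling the constants.

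Once the basic $L^2$ estimate is established, higher regularity follows by differentiating the equation and commuting derivatives past $V$: the commutators $[V,\partial^{\alpha}]K$ are again controlled by $|\nabla K|$, hence by $\nabla_V K/\varepsilon$, so the positivity structure is preserved at every order. This yields tame estimates for $L$ in a full scale of Sobolev spaces, and feeding them into the Nash-Moser iteration produces the desired local $C^\infty$ solutions of (\ref{eqn4}) and (\ref{eqn5}).
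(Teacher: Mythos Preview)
Your outline matches the paper's approach: reduce to the linearized equation via Nash--Moser as in \cite{HanKhuri}, then establish well-posedness for a boundary value problem by an energy/multiplier method in which $V$ supplies the multiplier direction and condition (\ref{eqn7'}) provides the positivity needed to absorb the lower-order and commutator terms. The paper makes this concrete by choosing coordinates so that $V=\varepsilon^{7/8}(\partial_y+O(\varepsilon))$ and $A=K_x+\psi K$, extending all coefficients $2$-periodically in $x$ so as to work on a cylinder (eliminating two boundary components), and imposing Dirichlet data at $y=1$ together with an oblique condition $\alpha u_x+u_y=0$ at $y=-1$; in these coordinates (\ref{eqn7'}) becomes exactly (\ref{eqn7}) with $\alpha=O(\varepsilon^{1/2})$, and Theorem \ref{thm1} then delivers existence, uniqueness, and the tame estimates.

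There is one genuine overclaim in your final sentence: the Nash--Moser iteration here does \emph{not} produce local $C^\infty$ solutions. The smallness of $\varepsilon$ required in Theorem \ref{thm1} depends on the Sobolev order $m$, and since $\varepsilon$ arises from the rescaling that fixes the size of the domain, the neighborhood on which one obtains a $C^m$ solution may shrink as $m\to\infty$. This is exactly why Theorem \ref{thm2} promises only ``sufficiently smooth'' local solutions, a phrase the paper explicitly defines to mean $z\in C^m(\Omega_m)$ with $\Omega_m$ possibly collapsing, rather than $C^\infty$ on a fixed neighborhood. Your tame estimates are order-by-order, not uniform in $m$, so the conclusion must be weakened accordingly.
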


By a \textit{sufficiently smooth} local solution, we mean that for each sufficiently large integer $m$ there exists a neighborhood $\Omega_{m}$
such that the solution $z\in C^{m}(\Omega_{m})$. However, this does not necessarily imply that smooth local solutions exist,
since the size of the domains $\Omega_{m}$ may become arbitrarily small as $m\rightarrow\infty$. Note that condition (\ref{eqn7'})
will be satisfied for a wide variety of Gaussian curvatures. To see this, suppose that local coordinates $x$, $y$ have been
chosen near a point (corresponding to the origin in the $xy$-plane) such that the vector field $V$ is given by $\partial_{y}$.
Then we may take $K(x,y)=k(y)\phi(x,y)$ where $\phi>0$, $k(y)=\exp(-|y|^{-1})$ for $y>0$, $k(y)=-\exp(-|y|^{-1})$ for $y<0$, and $k(0)=0$.
In this example $K$ changes sign to infinite order across a single curve. However the zero set $K^{-1}(0)$ may be much more general.
For instance $K^{-1}(0)$ may be given by the region $|y|\leq |x|$; if $K>0$ for $y>|x|$ and $K<0$ for $y<-|x|$ then condition (\ref{eqn7'})
will be satisfied in a sufficiently small neighborhood of the origin.

Consider the following class of boundary value problems for linear second order
partial differential equations of the form:
\begin{align}\label{eqn1}
\begin{split}
Lu&=Ku_{xx}+u_{yy}+Au_{x}+Bu_{y}=f\text{ }\text{ }\text{ in }\text{ }\text{ }\Omega,\\
\mathcal{B}u&=\alpha u_{x}+\beta u_{y}+\gamma u=g\text{ }\text{ }\text{ on }\text{ }\text{ }\partial\Omega.
\end{split}
\end{align}
The coefficient functions of $L$ and $\mathcal{B}$ are assumed to be smooth in the domain $\Omega\subset\mathbb{R}^{2}$
and on its (piecewise smooth) boundary $\partial\Omega$, respectively. Moreover the function $K$ will be required to change
sign in $\Omega$, so that $L$ is of mixed elliptic and hyperbolic type. In the case that $K=y$ and $A=B=0$, $L$ is the well-known
Tricomi operator, which has been heavily studied in the context of transonic flows. The change from elliptic
to hyperbolic type as one crosses the $x$-axis represents the passing from subsonic to supersonic speeds. In \cite{Tricomi},
Tricomi studied the homogeneous equation ($f=0$) inside a domain bounded by a simple arc in the elliptic region $y>0$,
and two intersecting characteristic curves in the hyperbolic region $y<0$, which emanate from the two points where the
arc intersects $y=0$. Dirichlet boundary data, that is $\mathcal{B}u=u$, were then prescribed on the simple arc and
on one of the characteristic curves, leaving the other characteristic curve without any prescribed boundary conditions.
He was able to show that this boundary value problem is well-posed: it admits a unique regular solution, with continuous
dependence on the given data. Such problems may be described as \textit{open} boundary value problems, since the solution
is not prescribed in any way along some portion of the boundary. Open boundary value problems arise in flows in nozzles
and in other applications, and have received considerable attention. In contrast, \textit{closed}
boundary value problems, in which $\mathcal{B}u$ is prescribed on the whole boundary, are less well-studied. This lack
of attention is not due, however, to the absence of applications. For instance closed problems arise in constructing smooth flows
about airfoils. Rather, closed problems turn out to be more difficult to study, since they are often overdetermined for
regular solutions. In \cite{Payne}, Lupo, Morawetz, and Payne considered such closed problems for the Chaplygin equation,
where $A=B=0$ and $K=K(y)$ satisfies the condition
\begin{equation*}
K(0)=0,\text{ }\text{ }\text{ and }\text{ }\text{ }yK(y)>0,\text{ }\text{ }\text{ for }\text{ }\text{ }y\neq 0.
\end{equation*}
They showed the existence and uniqueness of weak solutions for the the Dirichlet and mixed Dirichlet-conormal
boundary value problems, with minimal restrictions on the boundary geometry of the domain. Previous results on
closed problems, often required restrictions on the boundary geometry or on the way in which $K$ changes sign
that were too strong to be of much help in applications to transonic fluid flows.

In this paper, we will study the case of homogeneous boundary conditions for problem (\ref{eqn1}), with very weak
restrictions on the possible ways in which $K$ changes sign. Unlike in the Tricomi case, when $K=y$, one cannot
ignore the lower order terms, and thus we will find appropriate conditions to impose on the functions $A$ and $B$
for which this problem is well-posed. Our goal is then to find a natural closed boundary value problem which admits
a unique, regular solution for each right-hand side $f$, and which admits appropriate a priori estimates to show
a strong continuous dependence on the given data. The domain will be taken to be a rectangle
\begin{equation}\label{eqn3}
\Omega=\{(x,y)\mid |x|<1,\text{ }\text{ }|y|<1\},
\end{equation}
however the two sides $x=\pm 1$ will be identified so that $\Omega$ becomes a cylinder. Thus all the functions
involved must be $2$-periodic. Altogether this has the effect of greatly simplifying the problem by eliminating
half of the boundary. On the remaining two portions of the boundary, conditions will be imposed as follows. On the
top of the cylinder $y=1$ (in the elliptic region), Dirichlet conditions $\mathcal{B}u=u$ will be fixed, while
on the bottom $y=-1$ (in the hyperbolic region), an oblique derivative condition $\mathcal{B}u=\alpha u_{x}+u_{y}$ will
be applied for some appropriately chosen constant $\alpha$ depending on $K$.

In the process of studying this problem, we will outline a general method for determining appropriate boundary
value problems for mixed type equations of the form (\ref{eqn1}). The procedure is in fact just a reorganized version
of the classical $a-b-c$ method of Friedrichs (also referred to as the multiplier method \cite{Payne1}) together with the techniques
of Berezanskii \cite{Berezanskii} and Didenko \cite{Didenko},
which involve global energy estimates and negative norm spaces.

In order to state our result for the linearized equation, let $\varepsilon>0$ be a small parameter, and let $\Omega$ be given by (\ref{eqn3}).
Consider the following boundary value problem
\begin{align}\label{eqn6}
\begin{split}
L_{\varepsilon}u&=\varepsilon Ku_{xx}+u_{yy}+\varepsilon Au_{x}+\varepsilon Bu_{y}=f\text{ }\text{ }\text{ in }\text{ }\text{ }\Omega,\\
& u(x,1)=0,\text{ }\text{ }\text{ }\text{ }(\alpha u_{x}+u_{y})(x,-1)=0,\text{ }\text{ }\text{ }\text{ }
u\text{ }\text{ is 2-periodic in $x$}.
\end{split}
\end{align}
We would like to point out that similar boundary conditions were studied by Han in \cite{Han1}, in the setting of a first order system
and where $K=y+O(\varepsilon)$.

The Sobolev space of square integrable derivatives up to and including order $m$, for functions
2-periodic in $x$, will be denoted by $H^{m}(\Omega)$, and its norm will be denoted by
$\parallel\cdot\parallel_{H^{m}(\Omega)}$. We will prove

\begin{theorem}\label{thm1}
Let $m$ be a nonnegative integer, $\varepsilon>0$ a small parameter, and $\alpha$ a constant. Suppose that the coefficients $K$, $A$, and $B$ are smooth,
2-periodic in $x$, and satisfy the following condition
\begin{equation}\label{eqn7}
K_{y}-\alpha K_{x}+2\alpha A \geq\varepsilon^{1/4}(|K_{x}|+|K|+|A|)\text{ }\text{ }\text{ in }\text{ }\text{ }\Omega.
\end{equation}
If $\alpha^{2}> -\varepsilon\min_{|x|\leq 1}K(x,-1)$, and $\varepsilon$ is sufficiently small, depending on $m$, $\alpha$ as well
as on the coefficients of $L$, then for each $f\in H^{m+1}(\Omega)$ there exists a unique solution $u\in H^{m}(\Omega)$ of
boundary value problem (\ref{eqn6}). Moreover, there exists a constant $C$ depending only on $m$ and the coefficients
of $L$ and their derivatives up to and including order $m$, such that
\begin{equation}\label{eqn8}
\parallel u\parallel_{H^{m}(\Omega)}\leq C\parallel f\parallel_{H^{m+1}(\Omega)}.
\end{equation}
\end{theorem}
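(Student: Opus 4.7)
The plan is to implement the Friedrichs ``$abc$'' multiplier method together with a Berezanskii--Didenko duality argument for existence, then bootstrap to higher regularity.

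\emph{Step 1: Core energy estimate.} Motivated by the oblique boundary condition at $y=-1$, I would pair $L_{\varepsilon}u$ with the first-order multiplier
\begin{equation*}
Mu = u_{y}+\alpha u_{x}+\lambda u,
\end{equation*}
with a large negative constant $\lambda$ to be chosen, and compute $\int_{\Omega}(L_{\varepsilon}u)\,Mu$ by integration by parts, using $x$-periodicity to drop all $\partial_{x}$-divergence contributions. A direct calculation shows that the interior quadratic form has coefficient $\tfrac{\varepsilon}{2}(K_{y}-\alpha K_{x}+2\alpha A)$ on $u_{x}^{2}$, cross term $\varepsilon(A-K_{x}+\alpha B)u_{x}u_{y}$, and coefficient $\varepsilon B-\lambda$ on $u_{y}^{2}$ (so that for $\lambda<0$ the $u_{y}^{2}$ term provides uniform coercivity). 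The boundary contributions at $y=\pm1$ collapse, after using $u=0$ at $y=1$ and $u_{y}=-\alpha u_{x}$ at $y=-1$, to $\tfrac12 u_{y}^{2}(x,1)+\tfrac12(\varepsilon K+\alpha^{2})u_{x}^{2}(x,-1)$, both nonnegative, the second precisely by the hypothesis $\alpha^{2}>-\varepsilon\min K(\cdot,-1)$. Condition (\ref{eqn7}) is exactly the requirement that the $u_{x}^{2}$ coefficient dominate the cross term after Cauchy--Schwarz absorption against $|\lambda|u_{y}^{2}$; the $\varepsilon^{1/4}$ slack is what lets this closure be uniform in small $\varepsilon$. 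This yields an a priori energy estimate bounding a suitable weighted $H^{1}$-norm of $u$ by $\|L_{\varepsilon}u\|_{L^{2}}$, and in particular gives uniqueness.

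\emph{Step 2: Existence via duality.} The formal $L^{2}$-adjoint $L_{\varepsilon}^{*}$ has the same principal part; integration by parts identifies the adjoint boundary conditions as $v(x,1)=0$ and $(v_{y}-\alpha v_{x}-\varepsilon Bv)(x,-1)=0$, for which the Step-1 multiplier argument (with the obvious sign adjustments) yields the analogous estimate $\|v\|_{H^{1}}\leq C\|L_{\varepsilon}^{*}v\|_{L^{2}}$. Given $f\in H^{1}(\Omega)$, the linear functional $L_{\varepsilon}^{*}v\mapsto(f,v)_{L^{2}}$ on $\mathrm{Ran}(L_{\varepsilon}^{*})$ is therefore bounded by $\|f\|_{H^{1}}\|v\|_{H^{-1}}\leq C\|f\|_{H^{1}}\|L_{\varepsilon}^{*}v\|_{L^{2}}$, and Hahn--Banach together with the Riesz representation produces a weak solution $u\in L^{2}$ with $\|u\|_{L^{2}}\leq C\|f\|_{H^{1}}$, which is the case $m=0$ of (\ref{eqn8}). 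The loss of one derivative on the right-hand side is intrinsic to this duality argument, in which $f$ is paired against the dual of the space in which the adjoint is coercive -- this is precisely the negative-norm mechanism of Berezanskii and Didenko.

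\emph{Step 3: Higher regularity.} Since the cylinder has no boundary in the $x$-direction, the differentiated unknown $w=\partial_{x}^{k}u$ satisfies a boundary value problem of exactly the form (\ref{eqn6}) with right-hand side $\partial_{x}^{k}f+[\partial_{x}^{k},L_{\varepsilon}]u$, and the coercivity combination $K_{y}-\alpha K_{x}+2\alpha A$ is preserved under $\partial_{x}$. Applying Steps 1--2 to $w$ and absorbing the commutator (whose terms are $\varepsilon$-small relative to the coercive contribution, provided $\varepsilon$ is chosen small depending on $k$ derivatives of $K,A,B$) yields $\|\partial_{x}^{k}u\|_{L^{2}}\leq C\|f\|_{H^{k+1}}$ for $k\leq m$. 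The mixed and normal derivatives are recovered algebraically from the equation in the form $u_{yy}=f-\varepsilon Ku_{xx}-\varepsilon Au_{x}-\varepsilon Bu_{y}$, which trades one $y$-derivative for two $x$-derivatives plus one order on $f$; induction on $m$ gives the full estimate (\ref{eqn8}).

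\emph{Main obstacle.} The delicate step is the coercivity in Step 1: the coefficient $\tfrac{\varepsilon}{2}(K_{y}-\alpha K_{x}+2\alpha A)$ controlling $u_{x}^{2}$ is only of order $\varepsilon^{5/4}(|K_{x}|+|K|+|A|)$ by (\ref{eqn7}), and is even allowed to vanish wherever $K,K_{x},A$ all do, while the cross term and lower-order errors from $\lambda u\cdot L_{\varepsilon}u$ are only of order $\varepsilon$. The $\varepsilon^{1/4}$ slack in (\ref{eqn7}) is thus \emph{exactly} what is needed to close the Cauchy--Schwarz absorption, and this -- compounded in the higher-regularity step by the growing number of coefficient derivatives appearing in the commutators -- is the source of the $\varepsilon$-smallness hypothesis depending on $m$, $\alpha$, and the coefficients of $L$. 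A secondary obstacle is verifying that the adjoint multiplier in Step 2 produces boundary integrals of the correct sign, which is exactly what dictates the asymmetric choice of conditions in (\ref{eqn6}).
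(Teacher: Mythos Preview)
Your overall architecture---multiplier identity, duality for existence, tangential differentiation plus the equation for higher regularity---is exactly the paper's. The gap is in Step~1: the specific multiplier $Mu=u_{y}+\alpha u_{x}+\lambda u$ with constant $\lambda$ does not yield the interior coercivity you claim. The coefficient of $u_{x}^{2}$ is not $\tfrac{\varepsilon}{2}(K_{y}-\alpha K_{x}+2\alpha A)$ but $\tfrac{\varepsilon}{2}(K_{y}-\alpha K_{x}+2\alpha A-2\lambda K)$: the term $\lambda u\cdot\varepsilon K u_{xx}$ contributes $-\lambda\varepsilon K u_{x}^{2}$ after one integration by parts in $x$. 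In the hyperbolic region $K<0$ this extra piece equals $-|\lambda|\,\varepsilon|K|$, and for the ``large negative $\lambda$'' you propose it overwhelms the $\varepsilon^{1/4}|K|$ slack in hypothesis~(\ref{eqn7}) and drives the coefficient negative. So $\lambda$ (equivalently the paper's $c$) must be taken \emph{small}, of order at most $\varepsilon^{1/2}$. But a small constant $c$ gives no $u^{2}$ coercivity, since the $u^{2}$ coefficient is $\tfrac{1}{2}c_{yy}+O(\varepsilon)$.

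Even with that repaired, your Cauchy--Schwarz absorption of the cross term fails. The cross-term coefficient is $\varepsilon(A-K_{x}-\alpha B)$ (your sign on $\alpha B$ is off), and at any point where $K=K_{x}=A=0$---which (\ref{eqn7}) certainly allows---the $u_{x}^{2}$ coefficient vanishes while the cross term reduces to $-\varepsilon\alpha B\,u_{x}u_{y}$, which need not vanish; the quadratic form in $(u_{x},u_{y})$ is then indefinite and no Cauchy--Schwarz split closes. The paper resolves both issues simultaneously: it replaces the constant $\alpha$ by $a=\alpha\phi$ with $\phi$ solving the linear ODE $\alpha\phi_{y}+\varepsilon\alpha B\phi=\varepsilon(A-K_{x})$, $\phi(\cdot,-1)=1$, which makes the cross-term coefficient \emph{identically} zero; and it takes $c=-\varepsilon^{1/2}+\varepsilon^{3/4}(3y+y^{2})$, so that $|c|=O(\varepsilon^{1/2})$ keeps $-2cK$ absorbable by the $\varepsilon^{1/4}|K|$ slack while $c_{yy}=2\varepsilon^{3/4}>0$ manufactures genuine $u^{2}$ coercivity. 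Your boundary analysis and Steps~2--3 are essentially correct once Step~1 is amended in this way.
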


We also remark that the solutions produced by Theorem \ref{thm1} actually possess slightly better regularity than is stated here.
This will become clear from the proof in Section \ref{sec3}.

This paper is organized as follows. In Section \ref{sec2} we review the required functional analysis, and introduce the
general procedure for ascertaining appropriate boundary conditions to obtain a well-posed problem. In Section \ref{sec3}
this procedure is used to treat (\ref{eqn6}), and to prove Theorem \ref{thm1}. Finally, the proof of our main result Theorem \ref{thm2}
is given in Section \ref{sec4}. An appendix, Section \ref{sec5}, contains proofs of some functional analysis results.

\section{Finding the Appropriate Boundary Conditions}\label{sec2}

We begin by introducing the necessary functional analysis needed to apply the general procedure
for ascertaining appropriate boundary conditions associated with a differential operator. Much of the
discussion in this section is expository, and is reorganized here for our particular application.

Frequently when dealing with mixed type equations, regularity will occur at different levels for different
directions, and it is then advantageous to have function spaces which can identify this
difference. Thus we will be working with the anisotropic Sobolev spaces $H^{(m,l)}(\Omega)$, which consist of
functions having square integrable derivatives up to and including order $m$ in the $x$-direction
and order $l$ in the $y$-direction. Here $\Omega$ is a domain in the $xy$-plane, and the norm on
these spaces is given by
\begin{equation*}
\parallel u\parallel_{(m,l)}^{2}=\int_{\Omega}\sum_{0\leq s\leq m\atop 0\leq t\leq l}
(\partial_{x}^{s}\partial_{y}^{t}u)^{2}.
\end{equation*}
We will also have need of the negative norm spaces of Lax \cite{Lax}. For each $v\in L^{2}(\Omega)$
the negative norms are given by
\begin{equation}\label{eqn}
\parallel v\parallel_{(-m,-l)}=\sup_{u\in H^{(m,l)}(\Omega)}\frac{|(u,v)|}{\parallel u\parallel_{(m,l)}},
\end{equation}
where $(\cdot,\cdot)$ denotes the $L^{2}(\Omega)$ inner product, and the spaces $H^{(-m,-l)}(\Omega)$ are defined
to be the completion of $L^{2}(\Omega)$ in this norm. Clearly
\begin{equation*}
\parallel v\parallel_{(-m,-l)}\leq \parallel v\parallel:=\parallel v\parallel_{(0,0)}\leq \parallel v\parallel_{(m,l)},
\end{equation*}
and so the following inclusions hold
\begin{equation*}
H^{(m,l)}(\Omega)\subset L^{2}(\Omega)\subset H^{(-m,-l)}(\Omega).
\end{equation*}
Moreover we have the generalized Schwarz inequality
\begin{equation}\label{eqn9}
|(u,v)|\leq\parallel u\parallel_{(m,l)}\parallel v\parallel_{(-m,-l)},\text{ }\text{ }\text{ }\text{ }
u\in H^{(m,l)}(\Omega),\text{ }\text{ }\text{ }\text{ }v\in H^{(-m,-l)}(\Omega).
\end{equation}
The negative norm spaces are important because they arise as the dual spaces to the Sobolev spaces.

Let $L$ be a linear partial differential operator, and consider the boundary value problem
\begin{equation}\label{eqn11}
Lu=f\text{ }\text{ }\text{ in }\text{ }\text{ }\Omega, \text{ }\text{ }\text{ }\text{ }\mathcal{B}u=0\text{ }\text{ }
\text{ on }\text{ }\text{ }\partial\Omega,
\end{equation}
and the associated adjoint problem
\begin{equation}\label{eqn12}
L^{*}v=g\text{ }\text{ }\text{ in }\text{ }\text{ }\Omega, \text{ }\text{ }\text{ }\text{ }\mathcal{B}^{*}v=0\text{ }\text{ }
\text{ on }\text{ }\text{ }\partial\Omega,
\end{equation}
where $\mathcal{B}$ is as in (\ref{eqn1}), $L^{*}$ is the formal adjoint of $L$, and the adjoint boundary conditions
$\mathcal{B}^{*}v=0$ are defined as follows. Let $C^{\infty}_{\mathcal{B}}(\overline{\Omega})$ denote the space of smooth functions (up to the
boundary) on $\Omega$ satisfying the boundary condition in (\ref{eqn11}). Then a function $v\in C^{1}(\overline{\Omega})$ is said to
satisfy the adjoint boundary conditions if $(Lu,v)=(u,L^{*}v)$ for all $u\in C^{\infty}_{\mathcal{B}}(\overline{\Omega})$. The space of smooth functions (up to the boundary) on $\Omega$ satisfying the boundary conditions of (\ref{eqn12}) will be denoted by $C^{\infty}_{\mathcal{B}^{*}}(\overline{\Omega})$.  Our first task is to find
an appropriate notion of weak solution for (\ref{eqn11}). We will say that $u\in H^{(m,l)}(\Omega)$ is a weak solution of (\ref{eqn11}), if
\begin{equation}\label{eqn13}
(u,L^{*}v)=(f,v)\text{ }\text{ }\text{ for all }\text{ }\text{ }v\in C^{\infty}_{\mathcal{B}^{*}}(\overline{\Omega}).
\end{equation}
Clearly a weak solution in $C^{2}(\overline{\Omega})$ satisfies (\ref{eqn11}) in the classical sense.

\begin{theorem}\label{thm3}
Let $m,l,s,t\in\mathbb{Z}_{\geq 0}$. There exists a weak solution $u\in H^{(m,l)}(\Omega)$ of (\ref{eqn11}) for each
$f\in H^{(s,t)}(\Omega)$, if and only if there exists a constant $C$ such that
\begin{equation}\label{eqn13}
\parallel v\parallel_{(-s,-t)}\leq C\parallel L^{*}v\parallel_{(-m,-l)}\text{ }\text{ }\textit{ for all }\text{ }\text{ }
v\in C^{\infty}_{\mathcal{B}^{*}}(\overline{\Omega}).
\end{equation}
\end{theorem}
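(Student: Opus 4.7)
The plan is to prove both directions by a standard Lax--type duality argument. The key ingredients are the generalized Schwarz inequality (\ref{eqn9}), the identification of $H^{(-m,-l)}(\Omega)$ as the dual of $H^{(m,l)}(\Omega)$ (which is intrinsic to the Lax negative-norm construction and should be verified as a separate lemma in the appendix), together with the Hahn--Banach and closed graph theorems.

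\emph{Sufficiency (estimate implies existence).} Fix $f\in H^{(s,t)}(\Omega)$ and consider the subspace $W=\{L^{*}v:v\in C^{\infty}_{\mathcal{B}^{*}}(\overline{\Omega})\}\subset H^{(-m,-l)}(\Omega)$, on which I define the linear functional $\Phi(L^{*}v)=(f,v)$. The hypothesized estimate makes $\Phi$ well-defined: if $L^{*}v=0$ then $\parallel v\parallel_{(-s,-t)}=0$, and since $v$ is smooth this forces $v\equiv 0$. Combining the estimate with (\ref{eqn9}) gives
\[|\Phi(L^{*}v)|=|(f,v)|\leq \parallel f\parallel_{(s,t)}\parallel v\parallel_{(-s,-t)}\leq C\parallel f\parallel_{(s,t)}\parallel L^{*}v\parallel_{(-m,-l)},\]
so $\Phi$ is continuous on $W$. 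Hahn--Banach extends $\Phi$ to all of $H^{(-m,-l)}(\Omega)$, and the duality isomorphism then produces a $u\in H^{(m,l)}(\Omega)$ with $(u,w)=\Phi(w)$ for every $w$. Specializing $w=L^{*}v$ recovers $(u,L^{*}v)=(f,v)$, which is the weak formulation.

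\emph{Necessity (existence implies estimate).} Starting from the dual characterization
\[\parallel v\parallel_{(-s,-t)}=\sup_{0\neq f\in H^{(s,t)}(\Omega)}\frac{|(f,v)|}{\parallel f\parallel_{(s,t)}},\]
pick for each $f$ a weak solution $u_{f}$; the weak formulation and (\ref{eqn9}) give $|(f,v)|=|(u_{f},L^{*}v)|\leq \parallel u_{f}\parallel_{(m,l)}\parallel L^{*}v\parallel_{(-m,-l)}$. What is missing is a uniform bound $\parallel u_{f}\parallel_{(m,l)}\leq C\parallel f\parallel_{(s,t)}$. Because weak solutions need not be unique, I pass to the quotient $X=H^{(m,l)}(\Omega)/N$ with $N=\{u\in H^{(m,l)}(\Omega):(u,L^{*}v)=0\ \text{for all }v\in C^{\infty}_{\mathcal{B}^{*}}(\overline{\Omega})\}$. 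The assignment $T:f\mapsto[u_{f}]$ is a well-defined linear surjection $H^{(s,t)}(\Omega)\to X$; its graph is closed because, for each fixed $v$, the pairing $(\cdot,L^{*}v)$ is continuous on $H^{(m,l)}$ and $(\cdot,v)$ is continuous on $H^{(s,t)}$, so passing to the limit in $(u_{n},L^{*}v)=(f_{n},v)$ identifies the limit as a weak solution. The closed graph theorem then yields boundedness of $T$, and selecting a representative with $\parallel u_{f}\parallel_{(m,l)}\leq 2\parallel[u_{f}]\parallel_{X}$ closes the estimate.

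\textbf{Main obstacle.} The non-uniqueness of weak solutions in the necessity direction is the only subtle point; it forces the detour through the quotient $X$ and the closed graph theorem. Everything else reduces to a clean application of Hahn--Banach together with the duality $H^{(m,l)}(\Omega)^{*}\cong H^{(-m,-l)}(\Omega)$, a standard fact deserving its own appendix lemma.
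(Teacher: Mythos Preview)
Your sufficiency argument is essentially identical to the paper's: define the functional on the range of $L^{*}$, bound it via the generalized Schwarz inequality and the a priori estimate, extend by Hahn--Banach, and invoke the duality $H^{(-m,-l)}(\Omega)^{*}\cong H^{(m,l)}(\Omega)$ (which the paper indeed records as a separate lemma in the appendix).

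For necessity the two proofs diverge. The paper does not pass to a quotient or use the closed graph theorem; instead it applies the Banach--Steinhaus theorem directly. From the existence hypothesis one gets, for each fixed $f$, a bound $|(f,v)|\leq C_{f}\parallel L^{*}v\parallel_{(-m,-l)}$. Using the isometric identification of $H^{(s,t)}(\Omega)$ with $H^{(-s,-t)}(\Omega)$ (via Riesz and the duality lemma), the paper views the family
\[
J_{v}(w)=\bigl(w,\;v\,\parallel L^{*}v\parallel_{(-m,-l)}^{-1}\bigr)_{(-s,-t)},\qquad v\in C^{\infty}_{\mathcal{B}^{*}}(\overline{\Omega}),
\]
as pointwise bounded on $H^{(-s,-t)}(\Omega)$, concludes uniform boundedness, and reads off the estimate since $\parallel J_{v}\parallel=\parallel v\parallel_{(-s,-t)}\,\parallel L^{*}v\parallel_{(-m,-l)}^{-1}$. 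Your route---quotient by the null space $N$, then closed graph for $f\mapsto[u_{f}]$---is equally valid and handles the non-uniqueness cleanly; it trades the isometry bookkeeping of the paper for the quotient bookkeeping, and both ultimately rest on Baire category. Neither approach is materially simpler than the other.
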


This theorem generalizes a well-known result in the context of classical Sobolev spaces (see \cite{Berezanskii}) to the case of
the anisotropic Sobolev spaces. The proof requires only slight modification of the original and is thus relegated to the Appendix.
Moreover, this theorem shows that the problem of existence for (\ref{eqn11}) is reduced to establishing the
inequality (\ref{eqn13}). We now outline the basic procedure for accomplishing this goal.
This procedure will be implemented in the next section, for boundary value problem (\ref{eqn6}).

Let $v\in C^{\infty}_{\mathcal{B}^{*}}(\overline{\Omega})$, and consider an auxiliary boundary value problem
\begin{equation*}
Mu=v\text{ }\text{ }\text{ in }\text{ }\text{ }\Omega,\text{ }\text{ }\text{ }\text{ }\widetilde{\mathcal{B}}u=0
\text{ }\text{ }\text{ on }\text{ }\text{ }\partial\Omega,
\end{equation*}
where the differential operator $M$ and boundary operator $\widetilde{\mathcal{B}}$ are to be determined. The use of auxiliary
boundary value problems to study mixed type equations was first put forth by Didenko \cite{Didenko}. Note that
upon integrating by parts we have
\begin{equation}\label{eqn131}
(L^{*}v,u)-(v,Lu)=\int_{\partial\Omega}I_{1}(u,v),\text{ }\text{ }\text{ }\text{ }(Mu,Lu)=\int_{\Omega}I_{2}(u,u)
+\int_{\partial\Omega}I_{3}(u,u),
\end{equation}
for some quadratic forms $I_{1}$, $I_{2}$, and $I_{3}$. The goal is then to choose $M$, $\widetilde{\mathcal{B}}$, and
$\mathcal{B}^{*}$ appropriately so that
\begin{equation}\label{eqn14}
\int_{\Omega}I_{2}(u,u)\geq C^{-1}\parallel u\parallel^{2}_{(m,l)},
\end{equation}
\begin{equation}\label{eqn15}
\parallel v\parallel_{(-s,-t)}\leq C\parallel u\parallel_{(m,l)},
\end{equation}
and
\begin{equation}\label{eqn16}
\int_{\partial\Omega}(I_{1}(u,Mu)+I_{3}(u,u))\geq 0,
\end{equation}
where an additional integration by parts may be needed to obtain this last inequality. If this is successfully
achieved, then by applying the generalized Schwarz inequality, (\ref{eqn14}), and (\ref{eqn16}), we have
\begin{align*}
\parallel u\parallel_{(m,l)}\parallel L^{*}v\parallel_{(-m,-l)}&\geq(L^{*}v,u)\\
&=(v,Lu)+\int_{\partial\Omega}I_{1}(u,v)\\
&=(Mu,Lu)+\int_{\partial\Omega}I_{1}(u,v)\\
&=\int_{\Omega}I_{2}(u,u)+\int_{\partial\Omega}(I_{1}(u,Mu)+I_{3}(u,u))\\
&\geq C^{-1}\parallel u\parallel^{2}_{(m,l)}.
\end{align*}
The desired inequality (\ref{eqn13}) then follows from (\ref{eqn15}). In choosing the boundary conditions $\mathcal{B}^{*}$,
we note that the stronger the condition, the easier it is to establish (\ref{eqn16}), and hence existence. However a strong
condition $\mathcal{B}^{*}$ implies a weak condition $\mathcal{B}$, which could then make proving uniqueness for (\ref{eqn11})
difficult. Conversely, if the condition $\mathcal{B}^{*}$ is weak, then the condition $\mathcal{B}$ will be strong, which is an
advantageous situation for uniqueness but not existence. This just illustrates the intuitive fact, that a certain balance,
between existence and uniqueness, is needed when choosing boundary conditions in order to achieve a well-posed problem.

Lastly we point out how this procedure differs from the standard techniques. The first difference is the use of the anisotropic Sobolev spaces, while
the second difference concerns the use of inequality (\ref{eqn16}). Typically boundary conditions are chosen so that each of the boundary
integrals involving $I_{1}(u,v)$ and $I_{3}(u,u)$, vanish. This is of course much more restrictive than the requirement (\ref{eqn16}). It is
primarily this observation (that only (\ref{eqn16}) is needed) which allows us to establish the main theorems.

\section{Proof of Theorem \ref{thm1}}\label{sec3}

In this section we will study the following boundary value problem
\begin{align}\label{eqn17}
\begin{split}
L_{\varepsilon}u&=\varepsilon Ku_{xx}+u_{yy}+\varepsilon Au_{x}+\varepsilon Bu_{y}=f\text{ }\text{ }\text{ in }\text{ }\text{ }\Omega,\\
& u(x,1)=0,\text{ }\text{ }\text{ }\text{ }(\alpha u_{x}+u_{y})(x,-1)=0,\text{ }\text{ }\text{ }\text{ }
u\text{ }\text{ is 2-periodic in $x$},
\end{split}
\end{align}
where
\begin{equation*}
\Omega=\{(x,y)\mid |x|<1,\text{ }\text{ }|y|<1\},
\end{equation*}
and where $\alpha$ is a constant and all coefficients $K$, $A$, $B$, as well as the right-hand side $f$, are
2-periodic in $x$. The adjoint boundary value problem is given by
\begin{align}\label{eqn172}
\begin{split}
L_{\varepsilon}^{*}v&=\varepsilon Kv_{xx}+v_{yy}+\varepsilon(2K_{x}-A)v_{x}-\varepsilon Bv_{y}
+\varepsilon(K_{xx}-A_{x}-B_{y})v=g\text{ }\text{ }\text{ in }\text{ }\text{ }\Omega,\\
& v(x,1)=0,\text{ }\text{ }\text{ }\text{ }(\alpha v_{x}-v_{y})(x,-1)=0,\text{ }\text{ }\text{ }\text{ }
v\text{ }\text{ is 2-periodic in $x$}.
\end{split}
\end{align}
We will first establish existence for (\ref{eqn17}) in the appropriate spaces, under the assumption (\ref{eqn7}).
This will be accomplished by following the procedure from Section \ref{sec2}.

To begin, consider the auxiliary problem
\begin{align}\label{eqn173}
\begin{split}
Mu&=\sum_{s=0}^{m}(-1)^{s}\lambda^{-s}[\partial_{x}^{s}(a\partial_{x}^{s}u_{x})+\partial_{x}^{s}(b\partial_{x}^{s}u_{y})
+\partial_{x}^{s}(c\partial_{x}^{s}u)]=v\text{ }\text{ }
\text{ in }\text{ }\text{ }\Omega,\\
& u(x,1)=0,\text{ }\text{ }\text{ }\text{ }
u\text{ }\text{ is 2-periodic in $x$},
\end{split}
\end{align}
where $v\in C^{\infty}_{\mathcal{B}^{*}}(\overline{\Omega})$, and $a$, $b$, $c$ are functions to be given below (which are 2-periodic in $x$); in fact $b$ and $c$ will be
functions of $y$ alone. We claim that a unique smooth solution always exists. To see this, let
\begin{equation*}
w=\sum_{s=0}^{m}(-1)^{s}\lambda^{-s}\partial_{x}^{2s}u.
\end{equation*}
Clearly knowledge of $w$ yields knowledge of $u$. Thus we may create an iteration scheme in the following way, to find $u$.
Let $u_{0}=0$. Given $u_{i}$, solve
\begin{align*}
&a\partial_{x}w_{i+1}+b\partial_{y}w_{i+1}+cw_{i+1}=v-\sum_{s=0}^{m}(-1)^{s}\lambda^{-s}
\sum_{l=1}^{s}\left(\begin{array}{c}
s \\
l
\end{array}\right)\partial_{x}^{l}a\partial_{x}^{2s-l}(u_{i})_{x},\text{ }\text{ }
\text{ in }\text{ }\text{ }\Omega,\\
& w_{i+1}(x,1)=0,\text{ }\text{ }\text{ }\text{ }
w_{i+1}\text{ }\text{ is 2-periodic in $x$},
\end{align*}
for $w_{i+1}$ to obtain $u_{i+1}$. Note that this equation admits a unique smooth solution as long as $b\neq 0$ in
$\Omega$, according to the theory of first order partial differential equations. Moreover estimates are readily
available and can be used to show that the sequence $\{u_{i}\}$, so obtained, converges to the unique smooth solution
of (\ref{eqn173}).

Let $(n_{1},n_{2})$ denote the unit outer normal to $\partial\Omega$. In order to find the quadratic forms $I_{1}$, $I_{2}$,
and $I_{3}$ of (\ref{eqn131}), we integrate by parts and calculate
\begin{align}\label{eqn18}
\begin{split}
&(au_{x}+bu_{y}+cu,L_{\varepsilon}u)\\
&=\varepsilon\int_{\Omega}\frac{1}{2}[(bK)_{y}-2cK-(aK)_{x}+2aA]u_{x}^{2}
+[bA-(bK)_{x}-\varepsilon^{-1}a_{y}-aB]u_{x}u_{y}\\
&+\varepsilon\int_{\Omega}\frac{1}{2}[\varepsilon^{-1}(a_{x}-b_{y}-2c)+2bB]u_{y}^{2}
+\frac{1}{2}[(cK)_{xx}+\varepsilon^{-1}c_{yy}-(cA)_{x}-(cB)_{y}]u^{2}\\
& +\varepsilon\int_{\partial\Omega}\frac{1}{2}[aKn_{1}-bKn_{2}]u_{x}^{2}
+[bKn_{1}+\varepsilon^{-1}an_{2}]u_{x}u_{y}
+\varepsilon^{-1}\frac{1}{2}[bn_{2}-an_{1}]u_{y}^{2}\\
&+\varepsilon\int_{\partial\Omega}[cKn_{1}]uu_{x}+[\varepsilon^{-1}cn_{2}]uu_{y}
+\frac{1}{2}[cAn_{1}+cBn_{2}-(cK)_{x}n_{1}-\varepsilon^{-1}c_{y}n_{2}]u^{2},
\end{split}
\end{align}
\begin{align}\label{eqn19}
\begin{split}
&(Mu,L_{\varepsilon}u)\\
&=\sum_{s=0}^{m}\lambda^{-s}(a(\partial_{x}^{s}u)_{x}+b(\partial_{x}^{s}u)_{y}
+c(\partial_{x}^{s}u),L_{\varepsilon}(\partial_{x}^{s}u))\\
& +\sum_{s=0}^{m}\varepsilon\lambda^{-s}(a(\partial_{x}^{s}u)_{x}+b(\partial_{x}u)_{y}
+c(\partial_{x}^{s}u),\sum_{l=1}^{s}\left(\begin{array}{c}
s \\
l
\end{array}\right)(\partial_{x}^{l}K\partial_{x}^{s-l+2}u
+\partial_{x}^{l}A\partial_{x}^{s-l+1}u+\partial_{x}^{l}B\partial_{x}^{s-l}u_{y})),
\end{split}
\end{align}
and also
\begin{equation}\label{eqn20}
(L_{\varepsilon}^{*}v,u)-(v,L_{\varepsilon}u)=
\int_{\partial\Omega}[\varepsilon n_{1}Kv_{x}u
-\varepsilon n_{1}Kvu_{x}-n_{2}vu_{y}+n_{2}v_{y}u
+\varepsilon(n_{1}K_{x}-n_{1}A-n_{2}B)uv].
\end{equation}
Note that no boundary terms appear in (\ref{eqn19}) due to periodicity in the $x$-direction. According to the choice of the domain $\Omega$, we may disregard any boundary term with a factor of $n_{1}$.
Moreover, we will choose $b$ so that $b\neq 0$ in $\Omega$, and thus it is clear from (\ref{eqn172}) and (\ref{eqn173}) that
$u(x,1)=u_{y}(x,1)=0$. These two facts help simplify the expressions in (\ref{eqn18}), (\ref{eqn19}), and
(\ref{eqn20}). Furthermore by using the boundary condition $\mathcal{B}^{*}v=0$, and replacing $v$ with $Mu$,
in (\ref{eqn20}), we find that
\begin{align}\label{eqn21}
\begin{split}
&(L_{\varepsilon}^{*}v,u)-(v,L_{\varepsilon}u)\\
&=\int_{y=-1}(-n_{2}vu_{y}+n_{2}v_{y}u
-\varepsilon n_{2}Buv)\\
&=\int_{y=-1}v(u_{y}+\alpha u_{x}+\varepsilon Bu)\\
&=\int_{y=-1}\sum_{s=0}^{m}\lambda^{-s}(a\partial_{x}^{s}u_{x}
+b\partial_{x}^{s}u_{y}+c\partial_{x}^{s}u)
(\alpha\partial_{x}^{s}u_{x}+\partial_{x}^{s}u_{y}+\varepsilon B\partial_{x}^{s}u)\\
& +\int_{y=-1}\varepsilon\sum_{s=0}^{m}\lambda^{-s}(a\partial_{x}^{s}u_{x}
+b\partial_{x}^{s}u_{y}+c\partial_{x}^{s}u)\sum_{l=1}^{s}\left(\begin{array}{c}
s \\
l
\end{array}\right)\partial_{x}^{l}B\partial_{x}^{s-l}u\\
&=\int_{y=-1}\sum_{s=0}^{m}\lambda^{-s}[\alpha a(\partial_{x}^{s}u_{x})^{2}
+(a+\alpha b)(\partial_{x}^{s}u_{x})(\partial_{x}^{s}u_{y})- b(\partial_{x}^{s}u_{y})^{2}
-\frac{1}{2}\alpha c_{x}(\partial_{x}^{s}u)^{2}+c(\partial_{x}^{s}u)(\partial_{x}^{s}u_{y})]\\
& -\int_{y=-1}\varepsilon\sum_{s=0}^{m}\lambda^{-s}[\frac{1}{2}(aB)_{x}(\partial_{x}^{s}u)^{2}
-bB(\partial_{x}^{s}u)(\partial_{x}^{s}u_{y})-cB(\partial_{x}^{s}u)^{2}]\\
& +\int_{y=-1}\varepsilon\sum_{s=0}^{m}\lambda^{-s}(a\partial_{x}^{s}u_{x}
+b\partial_{x}^{s}u_{y}+c\partial_{x}^{s}u)\sum_{l=1}^{s}\left(\begin{array}{c}
s \\
l
\end{array}\right)\partial_{x}^{l}B\partial_{x}^{s-l}u.
\end{split}
\end{align}
Therefore by combining equations (\ref{eqn18}) (with various derivatives of $u$), (\ref{eqn19}), and (\ref{eqn21}) we obtain
\begin{align}\label{eqn22}
\begin{split}
&(L_{\varepsilon}^{*}v,u)\\
&=\sum_{s=0}^{m}\lambda^{-s}\varepsilon\int_{\Omega}\frac{1}{2}[(bK)_{y}-2cK-(aK)_{x}+2aA](\partial_{x}^{s}u_{x})^{2}
+[bA-(bK)_{x}-\varepsilon^{-1}a_{y}-aB](\partial_{x}^{s}u_{x})(\partial_{x}^{s}u_{y})\\
&+\sum_{s=0}^{m}\lambda^{-s}\varepsilon\int_{\Omega}\frac{1}{2}[\varepsilon^{-1}(a_{x}-b_{y}-2c)+2bB](\partial_{x}^{s}u_{y})^{2}
+\frac{1}{2}[(cK)_{xx}+\varepsilon^{-1}c_{yy}-(cA)_{x}-(cB)_{y}](\partial_{x}^{s}u)^{2}\\
& +\sum_{s=0}^{m}\varepsilon\lambda^{-s}(a(\partial_{x}^{s}u)_{x}+b(\partial_{x}^{s}u)_{y}
+c(\partial_{x}^{s}u),\sum_{l=1}^{s}\left(\begin{array}{c}
s \\
l
\end{array}\right)(\partial_{x}^{l}K\partial_{x}^{s-l+2}u
+\partial_{x}^{l}A\partial_{x}^{s-l+1}u+\partial_{x}^{l}B\partial_{x}^{s-l}u_{y}))\\
& +\sum_{s=0}^{m}\lambda^{-s}\int_{y=-1}[\alpha a+\frac{1}{2}\varepsilon bK](\partial_{x}^{s}u_{x})^{2}
+\frac{1}{2}b(\partial_{x}^{s}u_{y})^{2}+\alpha b(\partial_{x}^{s}u_{x})(\partial_{x}^{s}u_{y})\\
& +\sum_{s=0}^{m}\lambda^{-s}\int_{y=-1}\varepsilon bB(\partial_{x}^{s}u)(\partial_{x}^{s}u_{y})
+\frac{1}{2}[c_{y}+\varepsilon (cB-(aB)_{x})](\partial_{x}^{s}u)^{2}\\
& +\sum_{s=0}^{m}\lambda^{-s}\int_{y=-1}\varepsilon\sum_{l=1}^{s}\left(\begin{array}{c}
s \\
l
\end{array}\right)(a\partial_{x}^{s}u_{x}
+b\partial_{x}^{s}u_{y}+c\partial_{x}^{s}u)\partial_{x}^{l}B\partial_{x}^{s-l}u.
\end{split}
\end{align}

We are now ready to choose the functions $a$, $b$, and $c$. Let $\phi$ solve the following ODE
\begin{equation*}
\alpha\phi_{y}+\varepsilon\alpha B\phi=\varepsilon(A-K_{x})\text{ }\text{ }\text{ in }\text{ }\text{ }\Omega,\text{ }\text{ }
\text{ }\text{ }\phi(x,-1)=1.
\end{equation*}
Note that according to the definition of $\alpha$ in Theorem \ref{thm1}, $\phi=1+O(\varepsilon/|\alpha|)$.
Now set
\begin{equation}\label{eqn23}
a=\alpha\phi,\text{ }\text{ }\text{ }\text{ }b=1,\text{ }\text{ }\text{ }\text{ }c=-\varepsilon^{1/2}
+\varepsilon^{3/4}(3y+y^{2}).
\end{equation}
We immediately have
\begin{equation*}
(cK)_{xx}+\varepsilon^{-1}c_{yy}-(cA)_{x}-(cB)_{y}\geq \varepsilon^{-1/4}+O(\varepsilon^{1/2}),
\end{equation*}
\begin{equation*}
\varepsilon^{-1}(a_{x}-b_{y}-2c)+2bB\geq\varepsilon^{-1/2}+O(\varepsilon^{-1/4}),
\end{equation*}
and by the hypothesis (\ref{eqn7}) we also have
\begin{equation*}
(bK)_{y}-2cK-(aK)_{x}+2aA\geq 0.
\end{equation*}
By construction of $\phi$ it follows that the coefficient of the mixed derivative term $(\partial_{x}^{s}u_{x})(\partial_{x}^{s}u_{y})$,
in the second line of (\ref{eqn22}), is zero. There is however another mixed derivative term in the third line of this same
equation, however for $\varepsilon$ sufficiently small this is dominated by the sum of the two terms involving
$(\partial_{x}^{s}u_{x})^{2}$ and $(\partial_{x}^{s}u_{y})^{2}$. The remaining interior terms may be treated by one
more integration by parts, and by taking $\lambda$ sufficiently large. As for the boundary terms, we have that the
quadratic form
\begin{equation*}
[\alpha a+\frac{1}{2}\varepsilon bK](\partial_{x}^{s}u_{x})^{2}
+\alpha b(\partial_{x}^{s}u_{x})(\partial_{x}^{s}u_{y})+\frac{1}{2}b(\partial_{x}^{s}u_{y})^{2}
\end{equation*}
is positive, since
\begin{equation*}
\frac{1}{2}\left(\alpha^{2}+\frac{1}{2}\varepsilon K(x,-1)\right)-\frac{1}{4}\alpha^{2}>0
\end{equation*}
by the definition of $\alpha$. Moreover
\begin{equation*}
[c_{y}+\varepsilon (cB-(aB)_{x})](x,-1)=\varepsilon^{3/4}+O(\varepsilon),
\end{equation*}
and so the remaining boundary terms may be absorbed into those that are positive by taking $\varepsilon$ small,
and $\lambda$ large, after performing the appropriate integration by parts. Therefore there exists a constant $C>0$
such that
\begin{equation*}
(L^{*}_{\varepsilon}v,u)\geq C^{-1}\parallel u\parallel_{(m,1)}^{2}.
\end{equation*}
The generalized Schwarz inequality then yields
\begin{equation*}
\parallel L^{*}_{\varepsilon}v\parallel_{(-m,-1)}\geq C^{-1}\parallel u\parallel_{(m,1)}.
\end{equation*}
Furthermore, an integration by parts shows that
\begin{equation*}
\parallel v\parallel_{(-m-1,0)}\leq C\parallel u\parallel_{(m,1)},
\end{equation*}
and hence
\begin{equation*}
\parallel v\parallel_{(-m-1,0)}\leq C^{2}\parallel L^{*}_{\varepsilon}v\parallel_{(-m,-1)}.
\end{equation*}
Theorem \ref{thm3} may now be applied to boundary value problem (\ref{eqn17}), to obtain the existence of a
weak solution $u\in H^{(m,1)}(\Omega)$ for each $f\in H^{(m+1,0)}(\Omega)$.

We claim that this weak solution is in fact unique. This follows almost immediately from the calculations above.
Consider (\ref{eqn18}) with the same choices for $a$, $b$, and $c$ as in (\ref{eqn23}). The interior terms,
all together, are nonnegative, with the coefficient of $u^{2}$ positive, as we have shown. As for the boundary
integral, we may apply the boundary conditions $\mathcal{B}u=0$ (we are assuming here that $f\in C^{\infty}(\overline{\Omega})$
and hence $u\in C^{1}(\overline{\Omega})$, according to the additional regularity established below) and integrate by parts
to obtain
\begin{equation*}
\int_{y=1}\frac{1}{2}bu_{y}^{2}+\int_{y=-1}\frac{1}{2}[\varepsilon bK+2\alpha a-\alpha^{2}b]u_{x}^{2}
+\frac{1}{2}[c_{y}-\alpha c_{x}-\varepsilon cB]u^{2}.
\end{equation*}
This is clearly nonnegative. Therefore if $f=0$, we find that the only possible solution is $u=0$, and hence
uniqueness follows.

In order to obtain higher regularity for the solution given by Theorem \ref{thm3}, we will utilize the following
standard lemma concerning the difference quotient
\begin{equation*}
u^{q}(x,y):=\frac{u(x,y+q)-u(x,y)}{q}.
\end{equation*}

\begin{lemma}\label{lemma3}

$i)$ Let $u\in H^{(0,1)}(\Omega)$ and $\Omega'\subset\subset\Omega$ (that is, $\Omega'$ is compactly
contained in $\Omega$). Then
\begin{equation*}
\parallel u^{q}\parallel_{L^{2}(\Omega')}\leq\parallel
u_{y}\parallel_{L^{2}(\Omega)}
\end{equation*}
for all $0<|q|<\frac{1}{2}\mathrm{dist}(\Omega',\partial\Omega)$.
\smallskip

$ii)$  If $u\in L^{2}(\Omega)$ and $\parallel u^{q}\parallel_{L^{2}(\Omega')}\leq C$ for all
$0<|q|<\frac{1}{2}\mathrm{dist}(\Omega',\partial\Omega)$, then $u\in H^{(0,1)}(\Omega')$.

\end{lemma}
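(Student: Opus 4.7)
Both parts are the standard equivalence between $L^{2}$ boundedness of difference quotients and the existence of a weak derivative; in the anisotropic setting $H^{(0,1)}(\Omega)$ only the $y$-direction is involved, so the argument is essentially one-dimensional in $y$ with $L^{2}$-valued integrands in $x$, and follows the template in Evans or Gilbarg--Trudinger. I would prove the two statements separately, the first by a direct computation on smooth functions followed by density, and the second by weak compactness of $u^{q_{n}}$ in $L^{2}(\Omega')$.

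For part (i) I would first verify the inequality for $u\in C^{\infty}(\overline{\Omega})\cap H^{(0,1)}(\Omega)$ and then extend by density. Assume for concreteness $q>0$; the case $q<0$ is symmetric. The fundamental theorem of calculus in the $y$-variable gives
\begin{equation*}
u^{q}(x,y)=\frac{1}{q}\int_{0}^{q}u_{y}(x,y+t)\,dt,
\end{equation*}
and Cauchy--Schwarz in $t$ yields $|u^{q}(x,y)|^{2}\le q^{-1}\int_{0}^{q}|u_{y}(x,y+t)|^{2}\,dt$. Integrating over $\Omega'$ and using Fubini, the inner integral is $\parallel u_{y}\parallel_{L^{2}(\Omega'+(0,t))}^{2}$; since $\Omega'+(0,t)\subset\Omega$ for every $|t|\le|q|<\tfrac{1}{2}\mathrm{dist}(\Omega',\partial\Omega)$, this is bounded by $\parallel u_{y}\parallel_{L^{2}(\Omega)}^{2}$, and the factor $1/q$ cancels the length of the $t$-interval. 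Density of smooth functions in $H^{(0,1)}(\Omega)$, standard via mollification and unaffected by 2-periodicity in $x$, extends the inequality to the full space.

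For part (ii) the uniform bound and Banach--Alaoglu give a sequence $q_{n}\to 0$ and $w\in L^{2}(\Omega')$ with $u^{q_{n}}\rightharpoonup w$ weakly in $L^{2}(\Omega')$. For any $\phi\in C_{0}^{\infty}(\Omega')$ and $n$ so large that $|q_{n}|<\mathrm{dist}(\mathrm{supp}\,\phi,\partial\Omega')$, a change of variable $y'=y+q_{n}$ in the definition of $u^{q_{n}}$ produces the discrete integration by parts identity
\begin{equation*}
\int_{\Omega'}u^{q_{n}}\phi\,dx\,dy=-\int_{\Omega'}u\,\phi^{-q_{n}}\,dx\,dy.
\end{equation*}
Since $\phi^{-q_{n}}\to\phi_{y}$ uniformly on $\mathrm{supp}\,\phi$, the right-hand side tends to $-\int_{\Omega'}u\phi_{y}$ while the left-hand side tends to $\int_{\Omega'}w\phi$ by weak convergence, identifying $w$ as the weak $y$-derivative of $u$. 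Combined with $u\in L^{2}(\Omega')$, this yields $u\in H^{(0,1)}(\Omega')$. There is no substantive obstacle: only the density of smooth functions in the 2-periodic anisotropic space and the bookkeeping ensuring that all shifted points $(x,y+t)$ remain in the relevant domain need to be checked, and both are routine.
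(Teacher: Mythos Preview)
Your proposal is correct; both parts follow the standard textbook argument (e.g., Evans or Gilbarg--Trudinger) adapted trivially to the anisotropic space $H^{(0,1)}$. The paper does not give a proof of this lemma at all---it is stated as a ``standard lemma'' and used without further justification---so there is no approach to compare against, and your proof fills the gap appropriately.
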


Let $u \in H^{(m,1)}(\Omega)$ be the weak solution given by Theorem \ref{thm3}, for $f\in H^{m+1}(\Omega)$.
We will show that in fact $u\in H^{m}(\Omega)$. If $m\leq 1$ then this statement follows trivially, so assume
that $m\geq 2$. We may integrate by parts to obtain
\begin{eqnarray*}
-(u_{y}+\varepsilon Bu,v_{y})&=&
(f-\varepsilon Ku_{xx}-\varepsilon Au_{x}+\varepsilon B_{y}u,v)\\
&
&+\int_{\partial\Omega}(\varepsilon Avu n_{1}-v_{y}u n_{2}-\varepsilon Kv_{x}un_{1}
-\varepsilon K_{x}vun_{1}+\varepsilon Kvu_{x}n_{1}),
\end{eqnarray*}
for all $v\in C^{\infty}_{\mathcal{B}^{*}}(\overline{\Omega})$. Note that since $u\in H^{1}(\Omega)$ we have that $u|_{\partial\Omega}$
is meaningful in $L^{2}(\partial\Omega)$, and in particular, as $\mathcal{B}u=0$, we have that $u(x,1)=0$ in the $L^{2}$-sense.
Moreover $u_{x}\in H^{1}(\Omega)$ and so $u_{x}|_{\partial\Omega}\in L^{2}(\partial\Omega)$. Thus we may integrate by parts
and use that $\mathcal{B}^{*}v=0$ in order to show that
\begin{equation*}
\int_{\partial\Omega}(\varepsilon Avu n_{1}-v_{y}u n_{2}-\varepsilon Kv_{x}un_{1}
-\varepsilon K_{x}vun_{1}+\varepsilon Kvu_{x}n_{1})=0.
\end{equation*}
We may then write
\begin{equation*}
(\overline{u},v_{y})=(\overline{f},v)\text{ }\text{
}\text{ for all }\text{ }\text{ }v\in C^{\infty}_{\mathcal{B}^{*}}(\overline{\Omega}),
\end{equation*}
where
\begin{equation*}
\overline{u}=-u_{y}-\varepsilon Bu,\text{
}\text{ }\text{ }\text{
}\overline{f}=f-\varepsilon Ku_{xx}-\varepsilon Au_{x}+\varepsilon B_{y}u.
\end{equation*}
Furthermore
\begin{equation*}
(\overline{u}^{q},v_{y})=(\overline{f}^{q},v)\text{
}\text{ }\text{ for all }\text{ }\text{ }v\in C_{c}^{\infty}(\Omega),
\end{equation*}
so that choosing a sequence $v_{i}\in C^{\infty}_{c}(\Omega)$ with
$v_{i}\rightarrow-\eta u^{q}$ in $H^{(0,1)}(\Omega)$ for
some nonnegative $\eta\in C^{\infty}_{c}(\Omega)$, implies that
\begin{eqnarray*}
\parallel\sqrt{\eta}\overline{u}^{q}\parallel^{2}&\leq&
|(\overline{f}^{q},\eta
u^{q})|+|(\overline{u}^{q},\eta_{y}u^{q})|
+|(\overline{u}^{q},\eta(\varepsilon Bu)^{q})|\\
&\leq&
\parallel\sqrt{\eta}\overline{f}^{q}\parallel\parallel\sqrt{\eta}u^{q}\parallel
+\parallel\sqrt{\eta}\overline{u}^{q}\parallel\parallel\frac{\eta_{y}}{\sqrt{\eta}}u^{q}\parallel
+\parallel\sqrt{\eta}\overline{u}^{q}\parallel\parallel\sqrt{\eta}(\varepsilon Bu)^{q}\parallel,
\end{eqnarray*}
where $\parallel\cdot\parallel$ denotes the $L^{2}(\Omega)$ norm. Then since $u,\overline{f}\in H^{(0,1)}(\Omega)$ and
$|\nabla\eta|^{2}\leq C\eta$, Lemma \ref{lemma3} $(i)$ yields $\parallel\sqrt{\eta}\overline{u}^{q}\parallel\leq C$ for
some constant $C$ independent of $q$, if $|q|$ is sufficiently small.  Now Lemma \ref{lemma3} $(ii)$ shows that $\overline{u}\in
H^{(0,1)}_{loc}(\Omega)$, as $\eta$ was arbitrary.  Hence $u_{yy}\in L^{2}_{loc}(\Omega)$.  It follows that the equation
$L_{\varepsilon}u=f$ holds in $L^{2}_{loc}(\Omega)$, and since we can solve for $u_{yy}$, we may boot-strap in the usual way
to obtain $u\in H^{m}(\Omega)$.

Lastly, to show that the solution $u$ satisfies the estimate (\ref{eqn8}), we recall the proof of uniqueness above. This proof
immediately gives
\begin{equation*}
(au_{x}+bu_{y}+cu,f)\geq C\parallel u\parallel^{2}_{(0,1)}.
\end{equation*}
Upon integrating by parts
\begin{equation*}
(au_{x},f)=-(u,af_{x}+a_{x}f),
\end{equation*}
and thus we have
\begin{equation*}
\parallel f\parallel_{(1,0)}\geq C\parallel u\parallel_{(0,1)}.
\end{equation*}
By differentiating equation (\ref{eqn17}) with respect to $x$, and applying a similar procedure, we find that
\begin{equation*}
\parallel f\parallel_{(m+1,0)}\geq C\parallel u\parallel_{(m,1)}.
\end{equation*}
By solving for $u_{yy}$ in equation (\ref{eqn17}), we may then estimate all remaining derivatives to obtain
the desired estimate (\ref{eqn8}). This completes the proof of Theorem \ref{thm1}.

\section{Proof of Theorem \ref{thm2}}\label{sec4}

Theorem \ref{thm2} follows almost immediately from Theorem \ref{thm1} and previous work. More precisely, as is shown in
\cite{HanKhuri}, the nonlinear problems (\ref{eqn4}) and (\ref{eqn5}) can be reduced to a study of the linearized equation via an application of the
Nash-Moser implicit function theorem. By an appropriate choice of coordinates (see \cite{HanKhuri}) the following may be arranged. First, the linearized
equation will have the form (\ref{eqn6}) where $A=K_{x}+\psi K$ for some smooth function $\psi$, and second, the vector field $V$ from (\ref{eqn7'}) will
be given by $\varepsilon^{7/8}(\partial_{y}+O(\varepsilon))$, where the parameter $\varepsilon$ represents a rescaling of the original coordinates and thus determines the size
of the domain of existence for the nonlinear equations. Moreover, since we are only concerned with local solutions for equations (\ref{eqn4}) and (\ref{eqn5}),
we may suitably modify the coefficients of the linearized equation away from the origin so that they are 2-periodic in $x$. Now also, (\ref{eqn7'}) implies that
(\ref{eqn7}) holds with $\alpha=O(\varepsilon^{1/2})$, for $\varepsilon$ sufficiently small. Therefore upon applying Theorem \ref{thm1} we obtain a unique solution satisfying
an a priori estimate. Lastly, in order to carry out the Nash-Moser iteration, a more precise a priori estimate, referred to as the Moser-estimate,
is needed. The Moser-estimate elucidates the dependence of the solution on the coefficients of the linearization, and is easily derived from the energy method of the previous section
(see \cite{HanKhuri}). This completes the proof of Theorem \ref{thm2}.

\section{Appendix}\label{sec5}

In this section we include a proof of Theorem \ref{thm3} for convenience of the reader. To begin recall that
the negative norm spaces arise as the dual spaces of Sobolev spaces.

\begin{lemma}\label{lemma1}
$H^{(-m,-l)}(\Omega)=H^{(m,l)}(\Omega)^{*}$.
\end{lemma}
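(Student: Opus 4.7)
The plan is to identify $H^{(-m,-l)}(\Omega)$ with $H^{(m,l)}(\Omega)^*$ via the natural pairing induced by the $L^2$ inner product, following the standard template for Lax-type negative norm constructions adapted to the anisotropic setting.

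First I would define the map $T: L^2(\Omega) \to H^{(m,l)}(\Omega)^*$ by
\begin{equation*}
T(v)(u) = (u,v), \qquad u \in H^{(m,l)}(\Omega),\ v \in L^2(\Omega).
\end{equation*}
This is well defined because $H^{(m,l)}(\Omega) \subset L^2(\Omega)$, so the $L^2$ inner product makes sense. The definition of the negative norm in (\ref{eqn}) immediately gives
\begin{equation*}
\|T(v)\|_{H^{(m,l)}(\Omega)^*} = \sup_{u \in H^{(m,l)}(\Omega)} \frac{|(u,v)|}{\|u\|_{(m,l)}} = \|v\|_{(-m,-l)},
\end{equation*}
so $T$ is an isometry from $(L^2(\Omega),\|\cdot\|_{(-m,-l)})$ into the Banach space $H^{(m,l)}(\Omega)^*$. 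Since $H^{(-m,-l)}(\Omega)$ is by definition the completion of $L^2(\Omega)$ in this norm, $T$ extends uniquely to an isometric embedding $\overline{T}: H^{(-m,-l)}(\Omega) \hookrightarrow H^{(m,l)}(\Omega)^*$, whose image is a closed subspace.

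The main step is to show that $\overline{T}$ is surjective. This is where the bulk of the argument lies, but it reduces to a simple Hahn–Banach density argument. It suffices to prove that $T(L^2(\Omega))$ is dense in $H^{(m,l)}(\Omega)^*$, because then the image of the completion under $\overline{T}$, being both closed and containing a dense set, must equal all of $H^{(m,l)}(\Omega)^*$. For density, by the Hahn–Banach theorem (and the reflexivity of the Hilbert space $H^{(m,l)}(\Omega)$, so that $H^{(m,l)}(\Omega)^{**} = H^{(m,l)}(\Omega)$), it is enough to check that if $u \in H^{(m,l)}(\Omega)$ satisfies $T(v)(u) = 0$ for every $v \in L^2(\Omega)$, then $u = 0$. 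But $T(v)(u) = (u,v)$, and if $(u,v) = 0$ for all $v \in L^2(\Omega)$, then $u \equiv 0$ as an $L^2$ function, hence as an element of $H^{(m,l)}(\Omega)$.

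The main (mild) obstacle is the surjectivity step, and in particular arguing that one may identify the completion with the full dual rather than merely a closed subspace; once the Hahn–Banach density observation above is made, this obstacle disappears. No new technical ingredients beyond what is needed in the classical (non-anisotropic) case are required, since the anisotropic norm $\|\cdot\|_{(m,l)}$ enters only through the definition of the dual norm, and the embedding $H^{(m,l)}(\Omega) \subset L^2(\Omega)$ used above holds for any nonnegative integers $m,l$. This is the sense in which Lemma \ref{lemma1} is a direct transcription of the classical statement to the anisotropic setting, as noted just before Theorem \ref{thm3} in the main text.
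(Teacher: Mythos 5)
Your proposal is correct and follows essentially the same route as the paper: define the natural map $v\mapsto (\cdot,v)$ from $L^{2}(\Omega)$ into $H^{(m,l)}(\Omega)^{*}$, observe it is an isometry by the very definition of the negative norm, prove density of its image via Hahn--Banach and reflexivity of $H^{(m,l)}(\Omega)$, and extend to the completion. The only difference is presentational: you invoke the abstract extension-of-isometries-to-completions fact where the paper verifies well-definedness and injectivity of the extension by hand.
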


\begin{proof}
For each $v\in L^{2}(\Omega)$ define a bounded linear function $F_{v}(u)=(u,v)$ on $H^{(m,l)}(\Omega)$.
We first show that the set
\begin{equation*}
\Lambda_{(m,l)}=\{F_{v}\in H^{(m,l)}(\Omega)^{*}\mid v\in L^{2}(\Omega)\}
\end{equation*}
is dense in $H^{(m,l)}(\Omega)^{*}$. To see this, observe that if $\Lambda_{(m,l)}$ is not dense, then there exists
$F\in H^{(m,l)}(\Omega)^{*}-\overline{\Lambda_{(m,l)}}$; here $\overline{\Lambda_{(m,l)}}$ denotes the closure of $\Lambda_{(m,l)}$.
According to a standard corollary of the Hahn-Banach Theorem, there then exists $\mathfrak{L}\in H^{(m,l)}(\Omega)^{**}$
such that $\mathfrak{L}(F)\neq 0$ and $\mathfrak{L}=0$ on $\overline{\Lambda_{(m,l)}}$. However by reflexivity
of Hilbert spaces there exists a nonzero $f\in H^{(m,l)}(\Omega)$ such that $\mathfrak{L}(\widetilde{F})
=\widetilde{F}(f)$ for all $\widetilde{F}\in H^{(m,l)}(\Omega)^{*}$. Thus $F_{v}(f)=0$ for all $F_{v}\in \Lambda_{(m,l)}$, which
implies that $(f,v)=0$ for all $v\in L^{2}(\Omega)$, so that $f=0$, a contradiction. This shows that $\Lambda_{(m,l)}$ is dense.

Now consider the map
\begin{equation*}
\mathfrak{I}:H^{(-m,-l)}(\Omega)\rightarrow H^{(m,l)}(\Omega)^{*}
\end{equation*}
defined in the following way. Each $v\in H^{(-m,-l)}(\Omega)$ arises as a limit $v=\lim_{n\rightarrow\infty}v_{n}$,
for some $v_{n}\in L^{2}(\Omega)$. We may then set $\mathfrak{I}(v)=\lim_{n\rightarrow\infty}F_{v_{n}}$, where
convergence is with respect to the operator norm. To see that this is well-defined, let $v=\lim_{n\rightarrow\infty}v_{n}
=\lim_{n\rightarrow\infty}\overline{v}_{n}$, and observe that since $\parallel v\parallel_{(-m,-l)}=\parallel F_{v}\parallel$ we have
\begin{equation*}
\parallel F_{v_{n}}-F_{\overline{v}_{n}}\parallel=\parallel F_{v_{n}-\overline{v}_{n}}\parallel=\parallel v_{n}-\overline{v}_{n}\parallel
\rightarrow 0.
\end{equation*}
To see that this map is one-to-one, suppose that $\mathfrak{I}(v)=\mathfrak{I}(w)$ then
\begin{equation*}
0=\lim_{n\rightarrow\infty}\parallel F_{v_{n}}-F_{w_{n}}\parallel
=\lim_{n\rightarrow\infty}\parallel v_{n}-w_{n}\parallel_{(-m,-l)}
=\parallel v-w\parallel_{(-m,-l)},
\end{equation*}
so that $v=w$. Also by the density property proved above, $\mathfrak{I}$ is onto. Lastly
\begin{equation*}
\parallel\mathfrak{I}(v)\parallel=\parallel F_{v}\parallel=\parallel v\parallel_{(-m,-l)}
\end{equation*}
so that $\mathfrak{I}$ is an isometric isomorphism.

\end{proof}

We may now construct an inner product on $H^{(-m,-l)}(\Omega)$. Let
\begin{equation*}
\mathfrak{F}:H^{(m,l)}(\Omega)^{*}\rightarrow H^{(m,l)}(\Omega)
\end{equation*}
be the isometric isomorphism given by the Riesz Representation Theorem. Then set
\begin{equation*}
(u,v)_{(-m,-l)}=(\mathfrak{F}\circ\mathfrak{I}(u),\mathfrak{F}\circ\mathfrak{I}(v))_{(m,l)},
\end{equation*}
where $(\cdot,\cdot)_{(m,l)}$ is the usual inner product on $H^{(m,l)}(\Omega)$. Note that
if $v_{n}\rightarrow v$ in $H^{(-m,-l)}(\Omega)$ then $F_{v_{n}}\rightarrow F_{v}$ with respect to the
operator norm, since for any $u\in H^{(m,l)}(\Omega)$,
\begin{equation*}
|(u,v-v_{n})|\leq \parallel u\parallel_{(m,l)}\parallel v-v_{n}\parallel_{(-m,-l)}\rightarrow 0.
\end{equation*}
This shows that every bounded linear functional on $H^{(m,l)}(\Omega)$ can be represented by $F_{v}$ for some
$v\in H^{(m,l)}(\Omega)$, and may be used to find that
\begin{equation*}
(v,v)_{(-m,-l)}=(\mathfrak{F}\circ\mathfrak{I}(v),\mathfrak{F}\circ\mathfrak{I}(v))_{(m,l)}
=\parallel\mathfrak{I}(v)\parallel=\parallel F_{v}\parallel
=\sup_{u\in H^{(m,l)}(\Omega)}\frac{|(u,v)|}{\parallel u\parallel_{(m,l)}}.
\end{equation*}
Therefore the inner product $(\cdot,\cdot)_{(-m,-l)}$ correctly generates the norm $\parallel\cdot\parallel_{(-m,-l)}$
given by (\ref{eqn}). We also note that since Hilbert spaces are reflexive, we could conclude from Lemma \ref{lemma1}
that $H^{(-m,-l)}(\Omega)^{*}=H^{(m,l)}(\Omega)^{**}=H^{(m,l)}(\Omega)$, however we would like a specific form of this
result.

\begin{lemma}\label{lemma2}
Any $G\in H^{(-m,-l)}(\Omega)^{*}$ may be represented by a unique $u\in H^{(m,l)}(\Omega)$, such that $G(v)=(u,v)$ for
all $v\in H^{(-m,-l)}(\Omega)$. In particular $H^{(-m,-l)}(\Omega)^{*}=H^{(m,l)}(\Omega)$.
\end{lemma}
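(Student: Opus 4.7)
The plan is to apply the Riesz Representation Theorem directly to the Hilbert space $H^{(-m,-l)}(\Omega)$ (which we now know is a Hilbert space thanks to the inner product just constructed), and then transport the representer into $H^{(m,l)}(\Omega)$ via the isomorphisms $\mathfrak{I}$ and $\mathfrak{F}$.

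In detail, given $G \in H^{(-m,-l)}(\Omega)^*$, Riesz produces a unique $w \in H^{(-m,-l)}(\Omega)$ with $G(v) = (w,v)_{(-m,-l)}$ for every $v \in H^{(-m,-l)}(\Omega)$. I would then set $u = \mathfrak{F} \circ \mathfrak{I}(w) \in H^{(m,l)}(\Omega)$. Unwinding the definition of $(\cdot,\cdot)_{(-m,-l)}$ gives
\begin{equation*}
(w,v)_{(-m,-l)} = (\mathfrak{F}\circ\mathfrak{I}(w), \mathfrak{F}\circ\mathfrak{I}(v))_{(m,l)} = (u, \mathfrak{F}\circ\mathfrak{I}(v))_{(m,l)},
\end{equation*}
and the defining property of the Riesz isomorphism $\mathfrak{F}$ says precisely that the right hand side equals the action $\mathfrak{I}(v)(u)$ of the functional $\mathfrak{I}(v) \in H^{(m,l)}(\Omega)^*$ on $u$. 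Thus $G(v) = \mathfrak{I}(v)(u)$ for every $v \in H^{(-m,-l)}(\Omega)$.

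The next step is to identify $\mathfrak{I}(v)(u)$ with the pairing $(u,v)$ as written in the statement. For $v \in L^{2}(\Omega)$ one has $\mathfrak{I}(v) = F_{v}$ and $F_{v}(u) = (u,v)$ is the usual $L^{2}$ inner product, so the identification is tautological on the dense subspace $L^{2}(\Omega) \subset H^{(-m,-l)}(\Omega)$. For general $v$, approximate $v = \lim_{n\to\infty} v_{n}$ with $v_{n} \in L^{2}(\Omega)$; by the isometric property of $\mathfrak{I}$ proved in Lemma \ref{lemma1}, $F_{v_{n}} \to \mathfrak{I}(v)$ in operator norm, so both sides of $\mathfrak{I}(v)(u) = (u,v)$ extend continuously to $H^{(-m,-l)}(\Omega)$ and agree in the limit; this is exactly the generalized Schwarz inequality (\ref{eqn9}) read as saying that the $L^{2}$ pairing extends continuously to the duality pairing.

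For uniqueness of $u$, suppose two elements $u, u' \in H^{(m,l)}(\Omega)$ both represent $G$. Testing against an arbitrary $v \in L^{2}(\Omega) \subset H^{(-m,-l)}(\Omega)$ yields $(u - u', v) = 0$ in the ordinary $L^{2}$ sense for every $v \in L^{2}(\Omega)$, and hence $u = u'$. Finally, the identity $H^{(-m,-l)}(\Omega)^{*} = H^{(m,l)}(\Omega)$ follows since the assignment $u \mapsto G_{u}$, $G_{u}(v) := (u,v)$, is evidently linear, injective by the uniqueness just proved, surjective by the representation above, and isometric by the generalized Schwarz inequality together with (\ref{eqn}). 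The main thing to watch is not a hard estimate but a careful bookkeeping of the three distinct pairings in play — the extended $L^{2}$ duality $(u,v)$, the new Hilbert inner product $(\cdot,\cdot)_{(-m,-l)}$, and the standard $(\cdot,\cdot)_{(m,l)}$ — all threaded together by $\mathfrak{I}$ and $\mathfrak{F}$.
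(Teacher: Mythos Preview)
Your proof is correct, but it proceeds by a genuinely different route from the paper's. The paper argues in parallel to Lemma~\ref{lemma1}: it defines $G_{u}(v)=(u,v)$ directly, first verifies the isometry $\|G_{u}\|=\|u\|_{(m,l)}$ by exhibiting a norming functional $F_{v_{0}}$, then uses a Hahn--Banach/reflexivity argument to show that $\Lambda_{(-m,-l)}=\{G_{u}\}$ is dense in $H^{(-m,-l)}(\Omega)^{*}$, and finally closes up using completeness of $H^{(m,l)}(\Omega)$ to get surjectivity. You instead apply the Riesz Representation Theorem straight to the Hilbert space $H^{(-m,-l)}(\Omega)$ (whose inner product was built just before the lemma), and then transport the representer through $\mathfrak{F}\circ\mathfrak{I}$; the identification $\mathfrak{I}(v)(u)=(u,v)$ on $L^{2}$ plus density does the rest. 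Your approach is shorter and more conceptual, exploiting the already-constructed Hilbert structure rather than rerunning the density argument; the paper's approach has the virtue of being structurally parallel to Lemma~\ref{lemma1} and of establishing the isometry $\|G_{u}\|=\|u\|_{(m,l)}$ explicitly (you gesture at this via the generalized Schwarz inequality and the definition of the negative norm, which is fine but could stand a one-line justification of the reverse inequality).
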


\begin{proof}

Given $u\in H^{(m,l)}(\Omega)$ set $G_{u}(v)=(u,v)$, $v\in H^{(-m,-l)}(\Omega)$. By the generalized Schwarz inequality
(\ref{eqn9}), $\parallel G_{u}\parallel\leq \parallel u\parallel_{(m,l)}$ so that $G_{u}\in H^{(-m,-l)}(\Omega)^{*}$.
Moreover
\begin{equation*}
\parallel G_{u}\parallel=\sup_{v\in H^{(-m,-l)}(\Omega)}\frac{|(u,v)|}{\parallel v\parallel_{(-m,-l)}}
\geq \frac{|(u,v_{0})|}{\parallel v_{0}\parallel_{(-m,-l)}}=\frac{|F_{v_{0}}(u)|}{\parallel F_{v_{0}}\parallel},
\end{equation*}
where $v_{0}$ is chosen such that $F_{v_{0}}(u)=\parallel u\parallel_{(m,l)}$ and $\parallel F_{v_{0}}\parallel=1$.
This yields $\parallel G_{u}\parallel\geq\parallel u\parallel_{(m,l)}$, so we have $\parallel G_{u}\parallel=\parallel u\parallel_{(m,l)}$.

Consider the set
\begin{equation*}
\Lambda_{(-m,-l)}=\{G_{u}\in H^{(-m,-l)}(\Omega)^{*}\mid u\in H^{(m,l)}(\Omega)\}.
\end{equation*}
Then $\Lambda_{(-m,-l)}$ is dense in $H^{(-m,-l)}(\Omega)^{*}$. If not, then there exists $G\in H^{(-m,-l)}(\Omega)^{*}
-\overline{\Lambda_{(-m,-l)}}$. By a standard corollary of the Hahn-Banach Theorem there exists $\mathfrak{L}\in H^{(-m,-l)}(\Omega)^{**}$
such that $\mathfrak{L}(G)\neq 0$ and $\mathfrak{L}=0$ on $\overline{\Lambda_{(-m,-l)}}$. By reflexivity there is
a nonzero $f\in H^{(-m,-l)}(\Omega)$ with $\mathfrak{L}(\widetilde{G})=\widetilde{G}(f)$ for all $\widetilde{G}\in H^{(-m,-l)}(\Omega)^{**}$.
Thus $G_{u}(f)=0$ for all $G_{u}\in\Lambda_{(-m,-l)}$, which implies that $(u,f)=0$ for all $u\in H^{(m,l)}(\Omega)$, and hence
$f=0$, a contradiction.

Define a map
\begin{equation*}
\mathfrak{G}:H^{(m,l)}(\Omega)\rightarrow H^{(-m,-l)}(\Omega)^{*}
\end{equation*}
by $\mathfrak{G}(u)=G_{u}$. By the density property proved above, each $G\in H^{(-m,-l)}(\Omega)^{*}$ may be given by a limit
$G=\lim_{n\rightarrow\infty}G_{u_{n}}$, for some $u_{n}\in H^{(m,l)}(\Omega)$. Because $G_{u_{n}}$ converges and
$\parallel G_{u_{n}}\parallel=\parallel u_{n}\parallel_{(m,l)}$, we have that $u_{n}\rightarrow u$, and thus
$G(v)=(u,v)$ for all $v\in H^{(-m,-l)}(\Omega)$. That is, $\mathfrak{G}$ is onto. It is also clear that $\mathfrak{G}$ is
one-to-one, and $\parallel\mathfrak{G}(u)\parallel=\parallel G_{u}\parallel=\parallel u\parallel_{(m,l)}$, so that
$\mathfrak{G}$ is an isometric isomorphism.

\end{proof}

We now restate and give a proof of Theorem \ref{thm3}.

\begin{theorem}\label{thm4}
Let $m,l,s,t\in\mathbb{Z}_{\geq 0}$. There exists a weak solution $u\in H^{(m,l)}(\Omega)$ of (\ref{eqn11}) for each
$f\in H^{(s,t)}(\Omega)$, if and only if there exists a constant $C$ such that
\begin{equation}\label{eqn13'}
\parallel v\parallel_{(-s,-t)}\leq C\parallel L^{*}v\parallel_{(-m,-l)}\text{ }\text{ }\textit{ for all }\text{ }\text{ }
v\in C^{\infty}_{\mathcal{B}^{*}}(\overline{\Omega}).
\end{equation}
\end{theorem}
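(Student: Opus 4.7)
The proof is a standard duality argument whose two directions rest on Hahn--Banach (for sufficiency) and a closed graph argument (for necessity), using Lemma \ref{lemma2} to identify $H^{(m,l)}(\Omega)$ with the dual of $H^{(-m,-l)}(\Omega)$.

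For the sufficiency direction, assume the a priori estimate (\ref{eqn13'}). The plan is to define a linear functional on the range of $L^*$ and extend it. Concretely, on the subspace $W=\{L^*v\mid v\in C^{\infty}_{\mathcal{B}^*}(\overline{\Omega})\}\subset H^{(-m,-l)}(\Omega)$ define $\Phi(L^*v)=(f,v)$. This is well-defined, since (\ref{eqn13'}) shows that $L^*v=0$ forces $\|v\|_{(-s,-t)}=0$ and hence $(f,v)=0$ by the generalized Schwarz inequality (\ref{eqn9}). Moreover $\Phi$ is bounded on $W$ with respect to the $(-m,-l)$-norm: by (\ref{eqn9}) and (\ref{eqn13'}),
\begin{equation*}
|\Phi(L^*v)|=|(f,v)|\leq \|f\|_{(s,t)}\|v\|_{(-s,-t)}\leq C\|f\|_{(s,t)}\|L^*v\|_{(-m,-l)}.
\end{equation*}
The Hahn--Banach theorem extends $\Phi$ to a bounded linear functional on $H^{(-m,-l)}(\Omega)$, and Lemma \ref{lemma2} produces a unique $u\in H^{(m,l)}(\Omega)$ representing it, i.e.\ $\Phi(w)=(u,w)$ for every $w\in H^{(-m,-l)}(\Omega)$. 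Taking $w=L^*v$ recovers exactly the weak formulation (\ref{eqn13}).

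For the necessity direction, assume existence of a weak solution for every $f\in H^{(s,t)}(\Omega)$. Because solutions need not be unique, I will work in a quotient. Let $N=\{u\in H^{(m,l)}(\Omega)\mid (u,L^*v)=0\text{ for all }v\in C^{\infty}_{\mathcal{B}^*}(\overline{\Omega})\}$; this is a closed subspace. Define $T\colon H^{(s,t)}(\Omega)\to H^{(m,l)}(\Omega)/N$ by sending $f$ to the equivalence class of any weak solution; well-definedness and linearity are immediate. The key step is to check that $T$ is closed: if $f_n\to f$ in $H^{(s,t)}$ and $T(f_n)\to\bar u$ in the quotient, one can select representative weak solutions $u_n$ of $f_n$ converging in $H^{(m,l)}$ to some $u\in\bar u$ (by adding suitable elements of $N$, which preserve the weak-solution property). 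Passing to the limit in $(u_n,L^*v)=(f_n,v)$ shows $u$ is a weak solution of $f$, so $T(f)=\bar u$. The closed graph theorem then yields a constant $C$ with $\inf_{u\in T(f)}\|u\|_{(m,l)}\leq C\|f\|_{(s,t)}$ for all $f$.

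Given this boundedness, for any $v\in C^{\infty}_{\mathcal{B}^*}(\overline{\Omega})$ and any $f\in H^{(s,t)}(\Omega)$, choosing weak solutions $u$ with $\|u\|_{(m,l)}$ arbitrarily close to the infimum and applying (\ref{eqn9}) gives
\begin{equation*}
|(f,v)|=|(u,L^*v)|\leq \|u\|_{(m,l)}\|L^*v\|_{(-m,-l)}\leq C\|f\|_{(s,t)}\|L^*v\|_{(-m,-l)}.
\end{equation*}
Taking the supremum over $f\in H^{(s,t)}(\Omega)$ and invoking the definition (\ref{eqn}) yields $\|v\|_{(-s,-t)}\leq C\|L^*v\|_{(-m,-l)}$, which is (\ref{eqn13'}).

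The main obstacle is the necessity direction: the non-uniqueness of weak solutions forces the quotient construction, and verifying that $T$ is closed requires the small but careful step of selecting representatives that converge in $H^{(m,l)}$ rather than merely in the quotient. Once this is in hand, both implications are extracted from standard functional-analytic machinery (Hahn--Banach, Riesz representation via Lemma \ref{lemma2}, and the closed graph theorem).
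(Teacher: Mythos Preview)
Your sufficiency argument is essentially identical to the paper's: define the functional $F(L^{*}v)=(f,v)$ on the range of $L^{*}$, bound it via the generalized Schwarz inequality and (\ref{eqn13'}), extend by Hahn--Banach, and represent the extension through Lemma~\ref{lemma2}.

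Your necessity argument is correct but proceeds along a genuinely different route. The paper fixes, for each $f$, some weak solution $u_{f}$ and obtains the pointwise bound $|(f,v)|\leq C_{f}\,\|L^{*}v\|_{(-m,-l)}$ with an $f$-dependent constant; it then recasts this as a pointwise-bounded family of functionals $J_{v}$ on $H^{(-s,-t)}(\Omega)$ (using the isometry $\mathfrak{F}^{-1}\circ\mathfrak{I}^{-1}$ from Lemmas~\ref{lemma1} and~\ref{lemma2}) and invokes the Banach--Steinhaus theorem to upgrade $C_{f}$ to a uniform constant. You instead build a linear solution map $T$ into the quotient $H^{(m,l)}(\Omega)/N$, verify that $T$ has closed graph, and apply the closed graph theorem to obtain boundedness. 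Both arguments ultimately rest on Baire-category consequences; yours handles the non-uniqueness of weak solutions explicitly through the quotient, while the paper's approach sidesteps the issue by making an arbitrary choice of $u_{f}$ and letting uniform boundedness absorb the dependence on that choice. Your lifting step---choosing representatives $u_{n}$ that converge in $H^{(m,l)}(\Omega)$ rather than merely in the quotient---is the standard fact that convergence in $H^{(m,l)}/N$ allows one to adjust by elements of $N$ to get convergence upstairs, so the closed-graph verification is sound.
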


\begin{proof}
Suppose that the inequality (\ref{eqn13'}) holds, and consider the linear functional
\begin{equation*}
F:L^{*}C^{\infty}_{\mathcal{B}^{*}}(\overline{\Omega})=:X\rightarrow\mathbb{R}
\end{equation*}
given by
\begin{equation*}
F(L^{*}v)=(f,v),
\end{equation*}
for some fixed $f\in H^{(s,t)}(\Omega)$. Note that by the generalized Schwarz inequality and (\ref{eqn13'}),
\begin{equation*}
|F(L^{*}v)|\leq\parallel f\parallel_{(s,t)}\parallel v\parallel_{(-s,-t)}\leq C\parallel f\parallel_{(s,t)}
\parallel L^{*}v\parallel_{(-m,-l)},
\end{equation*}
and therefore $F$ is a bounded linear functional on the subspace $X\subset H^{(-m,-l)}(\Omega)$. The Hahn-Banach
Theorem then yields an extension $\widetilde{F}$ of $F$ to a bounded linear functional on all of $H^{(-m,-l)}(\Omega)$.
According to Lemma \ref{lemma2}, there then exists $u\in H^{(m,l)}(\Omega)$ such that
\begin{equation*}
\widetilde{F}(w)=(u,w)\text{ }\text{ }\text{ for all }\text{ }\text{ }w\in H^{(-m,-l)}(\Omega).
\end{equation*}
Upon restricting $w$ back to $X$, we obtain
\begin{equation*}
(u,L^{*}v)=\widetilde{F}(L^{*}v)=F(L^{*}v)=(f,v)\text{ }\text{ }\text{ for all }\text{ }\text{ }v\in
C^{\infty}_{\mathcal{B}^{*}}(\overline{\Omega}).
\end{equation*}

Conversely, assume that for any $f\in H^{(s,t)}(\Omega)$ there exists a weak solution $u_{f}\in H^{(m,l)}(\Omega)$, then
\begin{equation*}
|(f,v)|\leq|(u_{f},L^{*}v)|\leq\parallel u_{f}\parallel_{(m,l)}\parallel L^{*}v\parallel_{(-m,-l)}= C_{f}\parallel L^{*}v\parallel_{(-m,-l)}.
\end{equation*}
Consider the linear functional $G_{f}(v)=(f,v)$ on $H^{(-s,-t)}(\Omega)$. By the Riesz Representation Theorem we may write
$G_{f}(v)=(\mathfrak{F}^{-1}\circ\mathfrak{I}^{-1}(f),v)_{(-s,-t)}$ for some $\mathfrak{F}^{-1}\circ\mathfrak{I}^{-1}(f)\in H^{(-s,-t)}(\Omega)$.
From the proof of Lemmas \ref{lemma1} and \ref{lemma2} , we know that $\mathfrak{F}^{-1}\circ\mathfrak{I}^{-1}: H^{(s,t)}(\Omega)\rightarrow H^{(-s,-t)}(\Omega)$ is an isometry. Thus
\begin{equation*}
|(\mathfrak{F}^{-1}\circ\mathfrak{I}^{-1}(f),
v\parallel L^{*}v\parallel_{(-m,-l)}^{-1})_{(-s,-t)}|=|(f,v\parallel L^{*}v\parallel_{(-m,-l)}^{-1})| \leq C_{f}.
\end{equation*}
We now have a family of bounded linear functionals $J_{v}\in H^{(-s,-t)}(\Omega)^{*}$ given by
\begin{equation*}
J_{v}(u)=(u,v\parallel L^{*}v\parallel_{(-m,-l)}^{-1})_{(-s,-t)},
\end{equation*}
where $u=\mathfrak{F}^{-1}\circ\mathfrak{I}^{-1}(f)$ for some $f\in H^{(s,t)}(\Omega)$. This family is pointwise bounded for all
$v\in C^{\infty}_{\mathcal{B}^{*}}(\overline{\Omega})$, and therefore the Banach-Steinhaus Theorem asserts that this
family is uniformly bounded, that is, $\parallel J_{v}\parallel\leq C$ for all $v\in C^{\infty}_{\mathcal{B}^{*}}(\overline{\Omega})$.
However
\begin{equation*}
|J_{v}(u)|\leq\parallel\mathfrak{F}^{-1}\circ\mathfrak{I}^{-1}(f)\parallel_{(-s,-t)}\parallel v\parallel L^{*}v\parallel_{(-m,-l)}^{-1}\parallel_{(-s,-t)},
\end{equation*}
so that
\begin{equation*}
\parallel J_{v}\parallel\leq\parallel v\parallel L^{*}v\parallel_{(-m,-l)}^{-1}\parallel_{(-s,-t)}.
\end{equation*}
Also by choosing
\begin{equation*}
\mathfrak{F}^{-1}\circ\mathfrak{I}^{-1}(f)=v\parallel L^{*}v\parallel_{(-m,-l)}^{-1}
\end{equation*}
we obtain
\begin{equation*}
|J_{v}(v\parallel L^{*}v\parallel_{(-m,-l)}^{-1})|=\parallel v\parallel L^{*}v\parallel_{(-m,-l)}^{-1}\parallel_{(-s,-t)}^{2},
\end{equation*}
so that
\begin{equation*}
\parallel J_{v}\parallel\geq\parallel v\parallel L^{*}v\parallel_{(-m,-l)}^{-1}\parallel_{(-s,-t)}.
\end{equation*}
Hence
\begin{equation*}
\parallel J_{v}\parallel=\parallel v\parallel L^{*}v\parallel_{(-m,-l)}^{-1}\parallel_{(-s,-t)}.
\end{equation*}
Therefore the uniform bound yields
\begin{equation*}
\parallel v\parallel_{(-s,-t)}\leq C\parallel L^{*}v\parallel_{(-m,-l)}.
\end{equation*}

\end{proof}

\bibliographystyle{amsplain}

\begin{thebibliography}{99}


\bibitem{Berezanskii} Y. Berezanskii, \emph{Expansions in Eigenfunctions of Selfadjoint Operators},
Translations of Mathematical Monographs, Vol. 17, AMS, Providence, RI, 1968.

\bibitem{Didenko} V. Didenko, \emph{On the generalized solvability of the Tricomi problem}, Ukrain. Math. J.,
\textbf{25}(1973), 10-18.

\bibitem{Han1} Q. Han, \emph{On the isometric embedding of surfaces with Gauss curvature
changing sign cleanly}, Comm. Pure Appl. Math., \textbf{58}(2005), 285-295.

\bibitem{Han2} Q. Han, \emph{Local isometric embedding of surfaces with Gauss curvature changing
sign stably across a curve}, Calc. Var. \& P.D.E., \textbf{25}(2006), 79-103.

\bibitem{HanHong} Q. Han, J.-X. Hong,
\emph{Isometric Embedding of Riemannian Manifolds in Euclidean Spaces}, Mathematical
Surveys and Monographs, Vol. 130, AMS, Providence, RI, 2006.

\bibitem{HanKhuri} Q. Han, M. Khuri, \emph{On the local isometric embedding in $\mathbb{R}^{3}$ of surfaces
with Gaussian curvature of mixed sign}, Comm. Anal. Geom., \textbf{18}(2010), 649-704.

\bibitem{Khuri1} M. A. Khuri, \emph{The local isometric embedding in $\mathbb{R}^{3}$ of two-dimensional
Riemannian manifolds with Gaussian curvature changing sign to finite order on a curve}, J. Differential Geom.,
\textbf{76}(2007), 249-291.

\bibitem{Khuri2} M. A. Khuri, \emph{Local solvability of degenerate Monge-Amp\`{e}re equations and
applications to geometry}, Electron. J. Diff. Eqns., \textbf{2007}(2007), no. 65, 1-37.

\bibitem{Khuri3} M. A. Khuri, \emph{Counterexamples to the local solvability of Monge-Amp\`{e}re equations
in the plane}, Comm. PDE, \textbf{32}(2007), 665-674.

\bibitem{Lax} P. D. Lax, \emph{On Cauchy's problem for hyperbolic equations and the differentiability of solutions
of elliptic equations}, Comm. Pure Appl. Math. \textbf{8}(1955), 615-633.

\bibitem{Lin} C.-S. Lin, \emph{The local isometric embedding in $\mathbb{R}^{3}$ of two-dimensional
Riemannian manifolds with Gaussian curvature changing sign cleanly}, Comm. Pure
Appl. Math., \textbf{39}(1986), no. 6, 867-887.

\bibitem{Payne} D. Lupo, C. S. Morawetz, K. R. Payne, \emph{On closed boundary value problems for equations of
mixed elliptic-hyperbolic type}, Comm. Pure Appl. Math. \textbf{60}(2007), no. 9, 1319-1348.

\bibitem{Payne1} K. R. Payne, \emph{Multiplier methods for mixed type equations},
Int. J. Appl. Math. Stat., \textbf{8}(2007), no. M07, 58-75.

\bibitem{Tricomi} F. G. Tricomi, \emph{Sulle equazioni lineari alle derivate parziali di secondo ordine, di tipo misto},
Atti Acad. Naz. Lincei Mem. Cl. Fis. Mat. Nat., \textbf{14}(1923), no. 5, 134-247.






\end{thebibliography}

\end{document}